\theoremstyle{plain} 
\newtheorem{Theorem}{Theorem}[section] 
\theoremstyle{plain} 
\newtheorem{Definition}{Definition}[section] 
\theoremstyle{plain} 
\newtheorem{Lemma}{Lemma}[section] 
\theoremstyle{plain} 
\newtheorem{Problem}{Problem}[section] 
\theoremstyle{plain} 
\newtheorem{Remark}{Remark}[section] 
\title[An explicit $PSp_4(3)$-polynomial with 3 parameters of degree 40]
{An explicit $\mathbf{PSp_4(3)}$-polynomial with 3 parameters of degree 40, with an Appendix}  
\author{Hidetaka Kitayama}
\address{Department of Mathematics, Graduate School of Science, 
Osaka University,  
Machikaneyama 1-1, Toyonaka, Osaka, 560-0043, Japan} 
\email{h-kitayama@cr.math.sci.osaka-u.ac.jp}
\begin{document} 

\subjclass[2000]{Primary~12Y05, 11C08} 
\keywords{inverse Galois problem, explicit polynomials, Siegel modular forms} 

\begin{abstract} 
We will give an explicit polynomial over $\mathbb{Q}$ with 3 parameters 
of degree 40 as a result of the inverse Galois problem. 
Its Galois group over $\mathbb{Q} $ (resp. $\mathbb{Q}(\sqrt{-3})$) is 
isomorphic to $PGSp_4(3)$ (resp. $PSp_4(3)$)  
and it is a regular $PSp_4(3)$-polynomial over $\mathbb{Q}(\sqrt{-3})$. 
To construct the polynomial and prove its properties above  
we use some results of Siegel modular forms and permutation group theory. 
We give this polynomial in Appendix. 
\end{abstract} 

\maketitle  

\section{Introduction} 
In this paper, we will construct an explicit polynomial 
with 3 parameters of degree 40 
which has properties in Theorem \ref{main} below. 
In this section, we will explain its background. 

\begin{Definition} 
Let $G$ be a finite group and $K$ be a field. An extension $L/K$ is called 
$G$-extension over $K$ if $L/K$ is a Galois extension and the Galois group 
Gal$(L/K)$ is isomorphic to G. 
A polynomial $f(X)\in K[X]$ is called a $G$-polynomial over $K$ 
if the Galois group Gal$(f(X)/K)$ is isomorphic to $G$. 
\end{Definition} 
The inverse Galois problem asks whether there exists a $G$-extension over 
$K$ for a given field $K$ and a finite group $G$. 
This problem has been studied by many mathematicians as one of the most 
important problems in number theory, especially for the case of the rational 
number field $\mathbb{Q}$. 
It is still unknown whether this problem is affirmative for every finite 
group, but it has been proven affirmatively for a lot of kinds of finite 
groups. (See \cite{M99}. )
In this paper, we will consider the constructive aspects of this problem. 
Our problem is formulated as follows: 
\begin{Problem} \label{Prob} 
Construct an explicit polynomial having $G$ as the Galois group 
for a given transitive permutation group $G$. 
\end{Problem} 
Note that Problem \ref{Prob} for groups which are not conjugate as 
subgroups of the symmetric group are distinct from each other 
even if they are isomorphic as abstract groups. 
Complete results of Problem \ref{Prob} for all transitive permutation groups of 
degree up to 15 were given in \cite{KM}. 
The purpose of this paper is to give results for transitive permutation 
groups of degree 40. 
Our main theorem is as follows. We will prove this theorem in section 5. 
\begin{Theorem} \label{main} 
A polynomial $F(x,y,z;X)\in \mathbb{Q}(x,y,z)[X]$ 
with $3$ parameters $x,y,z$ of degree $40$, given in Appendix,  
has the following properties$:$ \\ 
$(1)$ the Galois group of $F(x,y,z;X)$ over $\mathbb{Q}(x,y,z)$ is conjugate to 
the primitive group $(40,4)$ in the GAP code. 
It is isomorphic to $PGSp_4(3)$, \\ 
$(2)$ the Galois group of $F(x,y,z;X)$ over $\mathbb{Q}(\sqrt{-3})(x,y,z)$ is 
conjugate to the primitive group $(40,3)$ in the GAP code. 
It is isomorphic to $PSp_4(3)$, \\ 
(3) $F(x,y,z;X)$ is a regular $PSp_4(3)$-polynomial over 
$\mathbb{Q}(\sqrt{-3})$. 
\end{Theorem}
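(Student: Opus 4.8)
\emph{Proof sketch.}
The plan is to combine the structural input coming from the construction of $F$ out of Siegel modular forms of level $3$ with an explicit specialization computation. Recall from the construction that the $40$ roots of $F$ are the coordinates attached to the $40$ points of $PG(3,\mathbb{F}_3)\cong\mathbb{P}(A[3])$ for the generic principally polarized abelian surface $A$ in the relevant family, and that the Galois action on them factors through the mod-$3$ representation of $A$ with values in $GSp_4(\mathbb{F}_3)$. This gives compatible inclusions
\[
\mathrm{Gal}\big(F/\overline{\mathbb{Q}}(x,y,z)\big)\;\hookrightarrow\;PSp_4(3)\;\trianglelefteq\;PGSp_4(3)\;\hookleftarrow\;\mathrm{Gal}\big(F/\mathbb{Q}(x,y,z)\big),
\]
all in the primitive degree-$40$ action, where the left inclusion lands in $PSp_4(3)$ because over $\overline{\mathbb{Q}}(x,y,z)$ the similitude character — which is the mod-$3$ cyclotomic character by the Weil pairing — is trivial. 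Since $PSp_4(3)$ is simple, the only normal subgroups of $PGSp_4(3)$ are $1$, $PSp_4(3)$ and $PGSp_4(3)$, and the field fixed by $PSp_4(3)$ is $\mathbb{Q}(\sqrt{-3})(x,y,z)$, because $\mathbb{Q}(\sqrt{-3})$ is the fixed field of the mod-$3$ cyclotomic character. (All of this is presumably established in the earlier sections; alternatively the quadratic subextension can be pinned down directly, e.g.\ via a resolvent or by identifying $\sqrt{-3}$ inside a specialized splitting field.)

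With these upper bounds in hand, it remains to prove the reverse inclusions, and for this I would specialize. Choose an explicit $(x_0,y_0,z_0)\in\mathbb{Q}^3$ so that $f:=F(x_0,y_0,z_0;X)$ is separable of degree $40$; check that $f$ is irreducible over $\mathbb{Q}$ — so that $\mathrm{Gal}(f/\mathbb{Q})$ is transitive — and then factor $f$ modulo a handful of primes of good reduction to read off Frobenius cycle types. Because $\mathrm{Gal}(f/\mathbb{Q})$ is (up to conjugacy) a subgroup of $PGSp_4(3)$ by specializing the inclusion above, it suffices to exhibit, among these Frobenius elements, a collection of cycle types contained in no maximal subgroup of $PGSp_4(3)$ in its $40$-point representation; the maximal subgroups being known and few in number, this forces $\mathrm{Gal}(f/\mathbb{Q})=PGSp_4(3)$. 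Since $\mathrm{Gal}(F/\mathbb{Q}(x,y,z))$ contains a conjugate of $\mathrm{Gal}(f/\mathbb{Q})$, we conclude $\mathrm{Gal}(F/\mathbb{Q}(x,y,z))=PGSp_4(3)$; matching the concrete action on the roots with the GAP library gives the primitive group $(40,4)$ and proves $(1)$.

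Statement $(2)$ is then formal: since $\mathbb{Q}(\sqrt{-3})(x,y,z)$ is the subfield fixed by the normal subgroup $PSp_4(3)$, we get $\mathrm{Gal}\big(F/\mathbb{Q}(\sqrt{-3})(x,y,z)\big)=PSp_4(3)$, which is the primitive group $(40,3)$. For $(3)$, let $G_{\mathrm{geom}}=\mathrm{Gal}\big(F/\overline{\mathbb{Q}}(x,y,z)\big)$. Since $\overline{\mathbb{Q}}(x,y,z)/\mathbb{Q}(x,y,z)$ is Galois, $G_{\mathrm{geom}}$ is normal in $\mathrm{Gal}\big(F/\mathbb{Q}(x,y,z)\big)=PGSp_4(3)$, hence equals $1$, $PSp_4(3)$ or $PGSp_4(3)$; it lies in $PSp_4(3)$ by the upper bound; and it is nontrivial because the family underlying $F$ is non-isotrivial, so its mod-$3$ monodromy over $\overline{\mathbb{Q}}(x,y,z)$ is nontrivial and $F$ does not split into linear factors there. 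Therefore $G_{\mathrm{geom}}=PSp_4(3)$, so the constant field of the splitting field of $F$ over $\mathbb{Q}(x,y,z)$ is the fixed field of $G_{\mathrm{geom}}$, namely $\mathbb{Q}(\sqrt{-3})$; equivalently, over $\mathbb{Q}(\sqrt{-3})(x,y,z)$ the geometric and arithmetic Galois groups coincide (both are $PSp_4(3)$), so $F$ is a regular $PSp_4(3)$-polynomial over $\mathbb{Q}(\sqrt{-3})$, which is $(3)$.

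The main obstacle I expect is the explicit part of the second step: in degree $40$ one must locate a good specialization and enough well-chosen primes so that the observed Frobenius cycle types simultaneously eliminate every maximal subgroup of $PGSp_4(3)$ on $40$ points — this needs computer algebra and may require several attempts, since a careless specialization can fall into a proper subgroup or fail to produce distinguishing cycle types. A secondary delicate point is making the structural input fully rigorous: the a priori embedding into $PGSp_4(3)$, the identification of the quadratic constant field with $\mathbb{Q}(\sqrt{-3})$ via the similitude/cyclotomic character, and the non-isotriviality (nontrivial geometric monodromy) used for regularity.
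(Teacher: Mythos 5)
Your architecture (sandwich the geometric group $PSp_4(3)$ inside the arithmetic group, identify the quadratic constant extension, deduce (2) and (3) formally) matches the paper's in outline, but the load-bearing step is one the paper explicitly cannot supply, and this is a genuine gap rather than a presentational one. Your entire argument rests on the a priori inclusion $\mathrm{Gal}(F/\mathbb{Q}(x,y,z))\hookrightarrow PGSp_4(3)$ coming from a mod-$3$ Galois representation of a family of abelian surfaces defined over $\mathbb{Q}$, with the similitude character identified with the mod-$3$ cyclotomic character. But $F$ is produced purely as the minimal polynomial of a primitive element of $K(\Gamma_0(3))/K(Sp_4(\mathbb{Z}))$, a tower of function fields over $\mathbb{C}$; the only arithmetic input is the observed fact that the coefficients happen to lie in $\mathbb{Q}[x,y,z]$. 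The paper's Remark 1.2 states outright that at this stage only the Galois group over $\mathbb{C}(x,y,z)$ is known and that descending the cover to a canonical $\mathbb{Q}$-model is a difficult problem. Without that descent you have no modular interpretation of the $\mathbb{Q}$-form, hence no upper bound $PGSp_4(3)$, and your Frobenius computation collapses: eliminating maximal subgroups of $PGSp_4(3)$ presupposes the specialized group lies in $PGSp_4(3)$, and absent the upper bound nothing rules out $A_{40}$ or $S_{40}$ for the generic group. The same unproved descent underlies your identification of the quadratic constant field with $\mathbb{Q}(\sqrt{-3})$ via the Weil pairing.

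The paper closes this gap with pure permutation group theory plus one discriminant computation, and in doing so also avoids your Frobenius step entirely. By construction the geometric Galois group is not merely contained in but \emph{equal to} $PSp_4(3)$ (the splitting field over $\mathbb{C}(x,y,z)$ is $K(\Gamma(3))$ because $PSp_4(3)$ is simple), and it sits as a transitive normal subgroup $\mathrm{Gal}(L/L\cap K')$ of the arithmetic group $\mathrm{Gal}(L/K)$. A transitive group containing a primitive transitive subgroup is primitive, and the classification of primitive groups of degree $40$ lists only eight; normality of $PSp_4(3)$ cuts these to $PSp_4(3)a$, $PSp_4(3)b$, $PGSp_4(3)a$, $PGSp_4(3)b$, of which only $PGSp_4(3)b=(40,4)$ lies outside $A_{40}$. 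Since the geometric group lies in $A_{40}$, the discriminant is $r\cdot g(x,y,z)^2$ for a squarefree integer $r$, and the single specialization $F(1,1,1;X)$ gives $r=-3$; this simultaneously pins down $(40,4)$ for (1), gives $PSp_4(3)=(40,3)$ over $\mathbb{Q}(\sqrt{-3})(x,y,z)$ for (2), and identifies $L\cap\overline{\mathbb{Q}}=\mathbb{Q}(\sqrt{-3})$ for (3). Your "alternative" of pinning down the quadratic subextension by a resolvent is in fact the paper's main tool; to repair your proof, replace the modular upper bound by the normalizer/primitivity argument and drop the Frobenius step, which is not needed once the geometric group is known exactly.
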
 
Here a regular $G$-polynomial is defined as follows:
\begin{Definition} 
A polynomial $f(\mathbf{t};X)\in K(\mathbf{t})[X]$ 
with some parameters $\mathbf{t}=(t_1,\cdots ,t_n)$ is called regular 
if it satisfies $\mathrm{Spl}(f(\mathbf{t};X)/K(\mathbf{t})) \cap \overline{K} = K$, 
where $\mathrm{Spl}(f(\mathbf{t};X)/K(\mathbf{t}))$ is the splitting field of 
$f(\mathbf{t};X)$ over $K(\mathbf{t})$ and 
$\overline{K}$ is an algebraic closure of $K$. 
\end{Definition} 
If $K$ is a Hilbertian field, especially algebraic number field,  
it is known that we can obtain infinitely many $G$-extensions over $K$ by 
specialization of a regular $G$-polynomial over $K$. (See \cite{JLY02}. ) 

We give some remarks concerning Theorem \ref{main}. 
\begin{Remark} 
Explicit polynomials over $\mathbb{Q}$ with $1$ parameter for $PSp_4(3)$ and 
$PGSp_4(3)$ as transitive permutation groups of degree $27$ are given in $p.412$ of 
\cite{M99} and 
they are regular $PSp_4(3)$- and $PGSp_4(3)$-polynomials over $\mathbb{Q}$. 
As mentioned in Theorem \ref{main} our polynomial $F(x,y,z;X)$ is just a regular 
$PSp_4(3)$-polynomial over $\mathbb{Q}(\sqrt{-3})$.
But our result is new because explicit polynomials with $3$ parameters 
for these two groups 
as transitive permutation groups of degree $40$ have not been known before. 
\end{Remark} 

\begin{Remark} 
We will construct our polynomial 
by using some results of Siegel modular forms in section $4$. 
Note that we only have its Galois group over $\mathbb{C}(x,y,z)$ at this 
stage and 
it is a difficult problem to descent fields of definition to $\mathbb{Q}$.  
In the case of $SL_2(\mathbb{Z})$, Shih studied this problem in \cite{S74} 
by using the theory of canonical systems of models  
and achieved regular Galois extensions over $\mathbb{Q}$. 
In our case, so far, we can not improve our polynomial to have regularity over $\mathbb{Q}$. 
\end{Remark} 

\ \\ 
\textbf{Notation.} 
The groups $PSp_4(3)$ and $PGSp_4(3)$ are defined as follows. 
We define the symplectic group by 
\[ Sp_4(\mathbb{Z}) := \{ g\in M_4(\mathbb{Z}) | {}^tg\cdot J\cdot g = J\} \] 
where J:=$\left( \begin{array}{cc} 0_2 & 1_2 \\ -1_2 & 0_2 \end{array} \right) $ 
 and $1_2$ is the unit matrix. 
The following two subgroups of $Sp_4(\mathbb{Z})$ will be important. 
\begin{eqnarray*} 
\Gamma _0(3) &:=& \{ g\in Sp_4(\mathbb{Z}) | g\equiv \left( \begin{array}{cc} A & B \\                    0_2 & D \end{array} \right) \ \mathrm{mod}\  3 \} ,\\ 
\Gamma (3)   &:=& \{ g\in Sp_4(\mathbb{Z}) | g \equiv 1_4 \ \mathrm{mod}\  3 \} .
\end{eqnarray*} 
By definition, we have $\Gamma (3) < \Gamma _0(3) < Sp_4(\mathbb{Z})$. 
It is known that the index $(Sp_4(\mathbb{Z}):\Gamma _0(3))=40$ and 
$(Sp_4(\mathbb{Z}):\Gamma (3))=51840$. 
$\Gamma (3)$ is a normal subgroup of $Sp_4(\mathbb{Z})$, so we put 
$Sp_4(3):=Sp_4(\mathbb{Z})/\Gamma (3)$ and define 
\[ PSp_4(3):=Sp_4(3)/\{ \pm 1_4\}. \] 
This is a non-abelian simple group of order 25920. 
We also define 
\[ PGSp_4(3):=\{ g\in GL(4,\mathbb{F}_3)\ |\ {}^tg\cdot J\cdot g=v\cdot J\ ,\ v\in \mathbb{F}_3^{\times } \} /\{ \pm 1_4\} . \] 
This is a non-abelian group of order 51840 which is isomorphic to 
$PSp_4(3)\rtimes C_2$.

\ \\ 
Acknowledgements: The author would like to thank to Professor Tomoyoshi 
Ibukiyama of Osaka University for giving him this problem and 
various advices. 

\section{Preliminaries from Siegel modular forms} 

We will consider a $PSp_4(3)$-extension by using some results of Siegel 
modular forms. So we review Siegel modular forms to fix notation. 
We denote by $H_2$ the Siegel upper half plane of degree 2, that is,  
\[ H_2 :=\{ Z\in M_2(\mathbb{C}) | {}^tZ = Z , Im(Z)>0\} . \] 
The group $Sp_4(\mathbb{Z})$ acts on $H_2$ by 
\[ gZ = (AZ+B)(CZ+D)^{-1} \] 
for any $g =\left( \begin{array}{cc} A & B \\ C & D \end{array} \right) 
\in Sp_4(\mathbb{Z})$ and any $Z \in H_2$. 
For any natural number $k$ and a holomorphic function $F$ on $H_2$, 
we put 
\[ F|[g]_k(Z) = \mathrm{det}(CZ+D)^{-k}F(gZ). \] 
For a finite index subgroup $\Gamma $ of $Sp_4(\mathbb{Z})$, we denote by 
$A_k(\Gamma )$ the space of all Siegel modular forms of weight $k$ of 
$\Gamma $, that is,  
\[ A_k(\Gamma )=\{ F:\mathrm{a\ holomorphic\ function\ on\ } H_2 \ | \ 
 F|[g]_k=F \   \mathrm{for\ all\ } g \in \Gamma \} . \] 
We put $A(\Gamma )=\oplus ^{\infty }_{k=0} A_k(\Gamma )$. 
The space $A(\Gamma )$ is a graded ring. 
The explicit structure of $A(Sp_4(\mathbb{Z}))$ and $A(\Gamma _0(3))$ is 
known as in the following theorems. 

\begin{Theorem}[Igusa\cite{Ig64}] 
\[ \bigoplus _{k=0}^{\infty } A_k(Sp_4(\mathbb{Z})) =
\mathbb{C}[\phi _4,\phi _6, \chi _{10}, \chi _{12}] \oplus 
\chi _{35}\mathbb{C}[\phi _4,\phi _6, \chi _{10}, \chi _{12}]. \] 
$\phi _4,\phi _6, \chi _{10}, \chi _{12}$ are algebraically independent over $\mathbb{C}$. 
\end{Theorem}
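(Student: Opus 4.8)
This is a classical theorem of Igusa; the route I would take is in essence his, through theta constants and the invariant theory of binary sextics. The starting point is the geometric fact that $\mathrm{Proj}$ of the graded ring $\bigoplus_k A_k(Sp_4(\mathbb{Z}))$ is the Satake compactification $\overline{\mathcal{A}_2} = \mathcal{A}_2 \sqcup \mathcal{A}_1 \sqcup \mathcal{A}_0$ of the moduli of principally polarized abelian surfaces, that $\mathcal{A}_2$ is, via the Torelli morphism, the moduli of genus-$2$ curves together with the products-of-elliptic-curves divisor, and that a genus-$2$ curve is a double cover of $\mathbb{P}^1$ branched at $6$ points. First I would descend to the level-$2$ cover: $\Gamma(2)\backslash H_2$ is uniformized by the ten even theta constants $\theta_m(Z)$ (weight $1/2$ each, the $6$ odd ones vanishing identically), and the graded ring they generate is classically understood (the level-$2$ projective geometry of the Segre cubic and its dual Igusa quartic). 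Since $Sp_4(\mathbb{Z})/\Gamma(2) \cong Sp_4(\mathbb{F}_2) \cong S_6$ acts on the theta constants, through Thomae's formulas, essentially by permuting the $6$ branch points, a form for the full group $Sp_4(\mathbb{Z})$ is the same as an $S_6$-invariant of theta constants, hence — via Torelli — the same as an $SL_2$-invariant of binary sextics, with the boundary strata of $\overline{\mathcal{A}_2}$ corresponding to the sextic degenerating (acquiring a multiple root) or the abelian surface becoming a product.

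I would then import the classical structure of the ring of $SL_2$-invariants of binary sextics (Clebsch, Bolza; see also Igusa's first paper): it is free of rank $2$ over a polynomial ring on four algebraically independent generators of even degree, with module basis $1$ and one generator of odd degree whose square is an explicit polynomial in the four. Transporting this through the dictionary yields a ring of exactly the shape in the theorem, and it remains to identify the generators as concrete Siegel modular forms, which I would do by computing their weights (equivalently their leading Fourier–Jacobi coefficients): two of the four even generators are the Siegel Eisenstein series $\phi_4,\phi_6$ of weights $4,6$, which are nonzero on the boundary with $\Phi\phi_4 = E_4$, $\Phi\phi_6 = E_6$ under the Siegel $\Phi$-operator (restriction to the lower-dimensional boundary), while the other two, $\chi_{10}$ and $\chi_{12}$, are cusp forms, corresponding to discriminant-type invariants that vanish along the degeneration locus, so $\Phi\chi_{10}=\Phi\chi_{12}=0$. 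Algebraic independence of $\phi_4,\phi_6,\chi_{10},\chi_{12}$ is then inherited from that of the four even invariants; alternatively it follows from surjectivity of $\Phi$ onto $\mathbb{C}[E_4,E_6]$ together with a transcendence-degree count ($\bigoplus_k A_k$ has transcendence degree $\dim H_2 + 1 = 4$), or from the non-vanishing of the modular Jacobian $\det\!\big(f_i,\ \partial_{z_{11}}f_i,\ \partial_{z_{12}}f_i,\ \partial_{z_{22}}f_i\big)_{i=1,\dots,4}$ of the four forms, which is a cusp form of weight $4+6+10+12+3 = 35$ and is, up to a scalar, exactly $\chi_{35}$. The odd part of the invariant ring transports to $\chi_{35}\cdot\mathbb{C}[\phi_4,\phi_6,\chi_{10},\chi_{12}]$, and the quadratic relation for the odd invariant becomes Igusa's identity expressing $\chi_{35}^2$ as a polynomial in $\phi_4,\phi_6,\chi_{10},\chi_{12}$; combining the even and odd parts gives the stated decomposition.

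The main obstacle is making the theta/Torelli dictionary completely precise and exhaustive: reconciling the various normalizations so that the generators come out with weights $4,6,10,12,35$, controlling the behaviour at every boundary stratum of $\overline{\mathcal{A}_2}$ so as to pin down which generators are cusp forms and that $\Phi$ has exactly the claimed kernel, and checking that nothing is lost off the Torelli locus (where $\chi_{10}$ vanishes) — together with securing the classical structure theorem for invariants of binary sextics itself, which is the substantive external input one must reprove or cite. A more hands-on alternative would avoid the invariant theory: construct $\phi_4,\phi_6,\chi_{10},\chi_{12},\chi_{35}$ directly (Eisenstein series; cusp forms as Saito–Kurokawa lifts or theta series with harmonic coefficients; $\chi_{35}$ as the modular Jacobian above), verify $\chi_{35}^2 \in \mathbb{C}[\phi_4,\phi_6,\chi_{10},\chi_{12}]$ and the algebraic independence by matching finitely many Fourier coefficients, and then invoke an independent dimension formula $\sum_k \dim A_k(Sp_4(\mathbb{Z}))\,t^k = (1+t^{35})/\big((1-t^4)(1-t^6)(1-t^{10})(1-t^{12})\big)$; the evident inclusion of $\mathbb{C}[\phi_4,\phi_6,\chi_{10},\chi_{12}] \oplus \chi_{35}\,\mathbb{C}[\phi_4,\phi_6,\chi_{10},\chi_{12}]$ into $A(Sp_4(\mathbb{Z}))$ is then a degreewise equality of dimensions, hence an isomorphism — with the dimension formula now playing the role of the main obstacle.
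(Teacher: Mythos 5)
The paper gives no proof of this statement; it is quoted verbatim as Igusa's classical structure theorem with a citation to \cite{Ig64}. Your sketch is a faithful and essentially correct outline of Igusa's own argument (theta constants at level $2$, the $Sp_4(\mathbb{F}_2)\cong S_6$ action, the dictionary with $SL_2$-invariants of binary sextics, and $\chi_{35}$ as the modular Jacobian), so it takes the same route as the cited source, with the acknowledged technical points (the Torelli locus and boundary behaviour) being exactly where the real work lies.
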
 

\begin{Theorem}[Ibukiyama\cite{Ib91}, Aoki and Ibukiyama\cite{AIb05}]  

We put 
\[ B:=\mathbb{C}[\alpha _1,\beta _3,\gamma _4,\delta _3]\ ,\ 
   C:=\mathbb{C}[\alpha _1^2,\beta _3^2,\gamma _4,\delta _3^2] \] 
then 
\[ \bigoplus _{k=0}^{\infty } A_k(\Gamma _0(3))=
   B^{\left( even\right) }\oplus C\alpha _1\chi _{14}\oplus C\beta _3\chi _{14}\oplus 
    C\delta _3\chi_{14}\oplus C\alpha _1\beta _3\delta _3\chi _{14}. \] 
$\alpha _1,\beta _3,\gamma _4,\delta _3$ are algebraically independent over $\mathbb{C}$.
\end{Theorem}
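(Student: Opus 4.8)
The plan is to realize the stated decomposition as the trivial-character part of a slightly larger graded ring generated by the five forms $\alpha_1,\beta_3,\gamma_4,\delta_3,\chi_{14}$, and then to pin that larger ring down by a Hilbert--Poincar\'e series comparison against a dimension formula. First I would fix the relevant quadratic character $\psi$ of $\Gamma_0(3)$ (the one factoring through $\Gamma_0(3)/\Gamma(3)$, equivalently the Legendre symbol at $3$) and exhibit $\alpha_1,\beta_3,\delta_3,\chi_{14}$ as modular forms of weights $1,3,3,14$ with character $\psi$ and $\gamma_4$ as a form of weight $4$ with trivial character, constructing them concretely as theta series attached to harmonic polynomials together with the distinguished weight-$14$ cusp form. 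The point of the character bookkeeping is that $A_k(\Gamma_0(3))$ in the theorem is the trivial-character space, while a monomial $\alpha_1^a\beta_3^b\gamma_4^c\delta_3^d\chi_{14}^e$ carries the character $\psi^{a+b+d+e}$; it lies in $A(\Gamma_0(3))$ exactly when $a+b+d+e$ is even, and its weight parity equals the parity of $a+b+d$.

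Next I would establish that $\alpha_1,\beta_3,\gamma_4,\delta_3$ are algebraically independent over $\mathbb{C}$. Since $A(\Gamma_0(3))$ is the homogeneous coordinate ring of a three-dimensional projective variety, its transcendence degree over $\mathbb{C}$ is $4$, so four forms are algebraically independent as soon as they are so generically; I would verify this by computing the Jacobian of the map they define on $H_2$ at a single point, or equivalently by exhibiting enough leading Fourier--Jacobi coefficients to rule out any nontrivial polynomial relation. Consequently $C=\mathbb{C}[\alpha_1^2,\beta_3^2,\gamma_4,\delta_3^2]$ is a genuine polynomial ring serving as a homogeneous system of parameters, and $B=\mathbb{C}[\alpha_1,\beta_3,\gamma_4,\delta_3]$ is free of rank $4$ over $C$: its $\psi$-even part $B^{(even)}$ has basis $1,\alpha_1\beta_3,\alpha_1\delta_3,\beta_3\delta_3$, and its $\psi$-odd part $B^{(odd)}$ has basis $\alpha_1,\beta_3,\delta_3,\alpha_1\beta_3\delta_3$, so $B^{(odd)}=C\alpha_1\oplus C\beta_3\oplus C\delta_3\oplus C\alpha_1\beta_3\delta_3$.

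The third step is to locate the single defining relation. Since $\chi_{14}^2$ has weight $28$ and trivial character it must lie in $B^{(even)}$, and by comparing Fourier expansions I would determine the explicit polynomial $P\in\mathbb{C}[\alpha_1,\beta_3,\gamma_4,\delta_3]$ with $\chi_{14}^2=P$. Granting the algebraic independence above (and that $P$ is not a square in the fraction field of $B$, so that $\{1,\chi_{14}\}$ stays a basis), the subring generated by all five forms is isomorphic to the hypersurface ring $R:=\mathbb{C}[\alpha_1,\beta_3,\gamma_4,\delta_3,\chi_{14}]/(\chi_{14}^2-P)$, which is free of rank $2$ over $B$ on $\{1,\chi_{14}\}$. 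Extracting the $\psi$-trivial part of $R$ yields $B^{(even)}\oplus B^{(odd)}\chi_{14}$, which by the previous paragraph is term-for-term the right-hand side of the theorem; in particular its Hilbert--Poincar\'e series is
\[
H(t)=\frac{1+2t^{4}+t^{6}+t^{15}+2t^{17}+t^{21}}{(1-t^{2})(1-t^{4})(1-t^{6})^{2}}.
\]

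It remains to prove that the five forms generate all of $A(\Gamma_0(3))$, i.e. that the inclusion of the right-hand side into the left is an equality. Every summand visibly consists of genuine trivial-character forms, so the right-hand side is contained in $A(\Gamma_0(3))$; to force equality I would compute $\dim_{\mathbb{C}}A_k(\Gamma_0(3))$ from a dimension formula for scalar-valued Siegel modular forms of degree $2$ on $\Gamma_0(3)$ (the Hirzebruch--Mumford main term together with the elliptic fixed-point and boundary contributions), form the generating function $\sum_k\dim A_k(\Gamma_0(3))\,t^k$, and check that it coincides with $H(t)$. Because $R$ is a Cohen--Macaulay hypersurface ring finite over the explicit homogeneous system of parameters $C$, it suffices to verify this equality in the finitely many weights up to the top socle degree, after which freeness over $C$ upgrades the coincidence of Hilbert series into the claimed direct sum. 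The main obstacle is exactly this dimension computation: producing and correctly evaluating the degree-$2$ dimension formula for $\Gamma_0(3)$, with its delicate fixed-point and cusp terms, and in parallel verifying that $\chi_{14}$ is nonzero and genuinely satisfies $\chi_{14}^2=P$ with the expected polynomial $P$.
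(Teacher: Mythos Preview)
The paper does not prove this theorem. It is stated in Section~2 as a preliminary result, attributed to Ibukiyama and to Aoki--Ibukiyama, and is then used as a black box in Section~4 to write down generators of $K(\Gamma_0(3))$. There is therefore no proof in the paper to compare your proposal against.

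That said, your outline is essentially the method of the cited references. The ingredients you isolate---explicit construction of $\alpha_1,\beta_3,\gamma_4,\delta_3$ as theta series with the quadratic character and of the weight-$14$ form, algebraic independence of the four generators, the single relation $\chi_{14}^2\in B$, and a Hilbert--Poincar\'e series match against an independently computed $\sum_k\dim A_k(\Gamma_0(3))\,t^k$---are exactly the steps carried out there, and your series
\[
H(t)=\frac{1+2t^{4}+t^{6}+t^{15}+2t^{17}+t^{21}}{(1-t^{2})(1-t^{4})(1-t^{6})^{2}}
\]
is the correct generating function for the displayed $C$-module. One point worth sharpening: you describe the character $\psi$ as ``factoring through $\Gamma_0(3)/\Gamma(3)$,'' which is true but not very restrictive since that quotient is large; the character in question is specifically $g\mapsto\bigl(\tfrac{\det D}{3}\bigr)$ for $g=\begin{pmatrix}A&B\\C&D\end{pmatrix}$. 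The serious labor, as you correctly flag, is the dimension formula. In the original references this is not obtained by a direct trace-formula computation on $\Gamma_0(3)$ but rather by first determining the full ring $A(\Gamma(3))$ via the geometry of the level-$3$ Siegel threefold (which is rational, cf.\ van der Geer) and then descending to $\Gamma_0(3)$ by taking invariants; if you intend to fill in this step rather than cite it, that route is more tractable than the Hirzebruch--Mumford calculation you sketch.
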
 

\section{Preliminaries from permutation group theory} 

In this section, we review permutation group theory. 
A subgroup $G$ of $S_n$, the symmetric group of degree $n$, is called 
transitive if arbitrary two elements in $\{ 1,\cdots , n\} $ can be permuted 
each other by $G$-action. 
It is well known that the Galois group of an irreducible separable 
polynomial of degree $n$ is a transitive subgroup of $S_n$. 
GAP\cite{GAP} has data bases of classification of transitive subgroups of 
$S_n$ for $n$ up to 30. 
These are based on Hulpke\cite{H05}. 
By using the data, we see the following fact.    
\begin{Lemma} 
The least degree of which $PSp_4(3)$ can be realized as a transitive subgroup 
of the symmetric group is 27.
\end{Lemma}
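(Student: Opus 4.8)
The least degree in which $PSp_4(3)$ embeds as a transitive subgroup of a symmetric group is $27$.

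The plan is to translate the statement into the language of subgroup indices. A faithful transitive action of a finite group $G$ on $n$ points is the same thing as the action of $G$ on the left cosets of a core-free subgroup of index $n$; hence the least degree in which $PSp_4(3)$ occurs as a transitive subgroup of a symmetric group equals the least index of a core-free subgroup of $PSp_4(3)$. Since $PSp_4(3)$ is a non-abelian simple group of order $25920$, \emph{every} proper subgroup is automatically core-free (its normal core is a proper normal subgroup, hence trivial), and every proper subgroup is contained in a maximal one of no larger index. So the quantity to be determined is exactly the least index of a maximal subgroup of $PSp_4(3)$.

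First I would invoke the classification of the maximal subgroups of $PSp_4(3)\cong PSU_4(2)$ — this goes back to Mitchell's determination of the maximal subgroups of $PSp_4(p)$ for odd $p$, is recorded in the ATLAS of Finite Groups, and is also available directly in GAP. Up to conjugacy there are exactly five classes: one of order $960$ and index $27$, of the shape $2^4{:}A_5$ (the stabiliser of a generator of the associated Hermitian surface over $\mathbb{F}_4$, equivalently of a line in the $27$-line configuration); one of order $720$ and index $36$, isomorphic to $S_6$; two of order $648$ and index $40$, namely the maximal parabolics $3^{1+2}_{+}{:}2A_4$ and $3^{3}{:}S_4$ (the stabilisers of a point and of a totally isotropic plane in $\mathrm{PG}(3,\mathbb{F}_3)$); and one of order $576$ and index $45$. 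The minimum of $27,36,40,40,45$ is $27$, and it is attained, so $PSp_4(3)$ embeds as a transitive subgroup of $S_{27}$ but of no $S_n$ with $n<27$. A concrete model of the degree-$27$ action is the action of $W(E_6)\cong PGSp_4(3)$ on the $27$ lines of a nonsingular cubic surface, restricted to the index-$2$ subgroup $PSp_4(3)$; one checks the line-stabiliser $2^4{:}S_5$ of order $1920$ meets $PSp_4(3)$ in a subgroup of order $960$, so the restricted action is still transitive.

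The one step that carries genuine content is the input that the least index of a maximal subgroup is $27$. I would be content to cite it, but if a self-contained argument is preferred one can argue by elimination: if $H<PSp_4(3)$ had index $n<27$, then $n\mid 25920$ and, by simplicity of $PSp_4(3)$, the coset action embeds $PSp_4(3)$ into $A_n$, forcing $51840\mid n!$ and hence $n\ge 9$; the remaining divisors $n\in\{9,10,12,15,16,18,20,24\}$ are then ruled out one at a time by comparing $|H|=25920/n$ with the possible orders of subgroups (for instance $n=9$ would make $PSp_4(3)$ a subgroup of index $7$ in $A_9$, impossible since the least index of a proper subgroup of $A_9$ is $9$). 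This bookkeeping is exactly what the transitive-groups library of GAP \cite{GAP}, available for all degrees up to $30$, performs automatically, which is the route taken here.
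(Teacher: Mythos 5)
Your proposal is correct, and it is in fact more informative than what the paper itself does: the paper gives no argument for this lemma beyond the remark that it can be read off from GAP's library of transitive groups of degree up to $30$ (Hulpke's classification), i.e.\ a pure database lookup. You instead make the standard reduction --- a faithful transitive action of degree $n$ is the coset action on a core-free subgroup of index $n$, and since $PSp_4(3)$ is simple every proper subgroup is core-free, so the minimal transitive degree equals the minimal index of a maximal subgroup --- and then quote the classification of maximal subgroups of $PSp_4(3)\cong PSU_4(2)$, with indices $27,36,40,40,45$. That is a much smaller and more classical piece of external data (Mitchell/ATLAS) than the full transitive-groups library, and it explains structurally where $27$ comes from; as a bonus it exhibits the two index-$40$ maximal subgroups that underlie the two inequivalent degree-$40$ actions $PSp_4(3)a$ and $PSp_4(3)b$ used later in the paper. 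The trade-off is that your ``self-contained'' elimination of degrees $n<27$ is only sketched (you handle $n=9$ and defer the remaining divisors), so as written your argument still rests on a citation, just a different and better-localized one. Both routes are sound and both are, at bottom, appeals to known classifications; yours isolates the external input more sharply than the paper's.
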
 
Next we consider primitive groups, which are of special type in transitive 
groups.  
\begin{Definition} 
Let $G$ be a transitive subgroup of $S_n$. 
$G$ is called primitive if there are no partitions of $\{ 1,\cdots ,n\} $
which satisfy the following two conditions. 
\begin{enumerate} 
\item[(1)] $\{ 1,\cdots ,n\} =\cup ^{r}_{i=1}B_i,\ r\ge 2, \\ 
 \sharp B_i\ge 2\ (i=1,\ldots ,r)$ and $B_i\cap B_j=\emptyset \ (i\neq j).$
\item[(2)] $G$ induces a transitive action on $\{B_1,\ldots ,B_r\} $. 
\end{enumerate} 
\end{Definition} 
GAP\cite{GAP} has data bases of classification of primitive subgroups of $S_n$
for $n$ up to 2499. 
These are based on Colva M. Roney-Dougal \cite{RC05}. 
Among these, we will use the following. 
\begin{Lemma} 
Primitive groups of degree 40 are one of the following 8 groups up to 
conjugacy. 
\[ PSp_4(3)a, PSp_4(3)b, PGSp_4(3)a, PGSp_4(3)b, \] 
\[ PSL_4(3), PGL_4(3), A_{40}, S_{40}. \] 

(The symbols $^^ ^^ a"$ and $^^ ^^ b"$ mean that one group is isomorphic 
but is not conjugate to the other.)
\end{Lemma}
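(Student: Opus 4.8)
This lemma is the degree-$40$ instance of the classification of finite primitive permutation groups, so the plan is to combine the O'Nan--Scott theorem with the classification of finite simple groups --- which is exactly what underlies Roney-Dougal's tables \cite{RC05} and the GAP primitive groups library \cite{GAP}, and in the write-up I would ultimately just appeal to those.

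First I would run through the O'Nan--Scott types for a primitive group $G\le S_{40}$. Since $40=2^3\cdot 5$ is not a prime power, $G$ is not of affine type. Since $40$ is not a proper power $m^k$ with $k\ge 2$ and $m\ge 2$, no product action type occurs. Since the smallest nonabelian simple group has order $60>40$, neither the diagonal types (whose degrees are divisible by the order of the relevant simple group, hence at least $60$) nor the twisted wreath type (degree $|T|^k\ge 60^2$) can have degree $40$. Hence $G$ is almost simple: there is a nonabelian simple group $T$ with $T\trianglelefteq G\le\operatorname{Aut}(T)$ and a core-free maximal subgroup $M<G$ of index $40$, so $T$ itself has a faithful transitive action of degree $40$.

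Second I would enumerate the admissible socles. From $|T|\mid 40!/2$ together with the requirement that $T$ contain a subgroup whose index divides $40$ and whose normal core is trivial, the classification of finite simple groups leaves exactly $T=A_{40}$, $T=PSL_4(3)$ and $T=PSp_4(3)$: indeed $PSL_4(3)$ acts on the $40$ points of $PG(3,3)$ through a maximal parabolic subgroup (equivalently on the $40$ hyperplanes, an action carried to the first by the graph automorphism), while $PSp_4(3)$ has two classes of maximal subgroups of index $40$ that are not fused by $\operatorname{Out}(PSp_4(3))=C_2$. For each such $T$ I would then list the groups $T\le G\le\operatorname{Aut}(T)$ that still contain a core-free subgroup of index $40$ acting primitively, and count them up to conjugacy in $S_{40}$ (i.e.\ up to permutation isomorphism), recalling that $G/H$ and $G/K$ are permutation isomorphic iff $K$ is $\operatorname{Aut}(G)$-conjugate to $H$. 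This gives $A_{40}$ and $S_{40}$; the point action of $PSL_4(3)$ (equivalent via the graph automorphism to the hyperplane action) and its extension $PGL_4(3)$; and the two inequivalent degree-$40$ actions of $PSp_4(3)$, labelled $a$ and $b$, each extending to $PGSp_4(3)$. Counting $2+2+4$ yields the $8$ groups in the statement.

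The hard part is the second step: ruling out every other finite simple group as socle is not a short argument but a case analysis across the families $A_n$, $PSL$, $PSp$, $P\Omega^{\pm}$, $PSU$, the exceptional groups of Lie type, and the sporadic groups, resting on the classification of finite simple groups and on known lists of low-index subgroups. Rather than reproduce this, in the paper I would simply invoke Roney-Dougal's classification of primitive groups of degree up to $2499$ \cite{RC05}, equivalently read the list \texttt{AllPrimitiveGroups(NrMovedPoints,40)} off GAP \cite{GAP}, and identify there the eight isomorphism types together with the $a$/$b$ conjugacy distinctions.
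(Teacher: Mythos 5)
Your proposal is correct and ultimately rests on exactly the same justification as the paper, which offers no argument beyond citing Roney-Dougal's classification \cite{RC05} as implemented in the GAP primitive groups library \cite{GAP}. The O'Nan--Scott reduction you sketch (ruling out affine, product, diagonal and twisted wreath types for degree $40$ and narrowing the socle to $A_{40}$, $PSL_4(3)$ or $PSp_4(3)$) is sound and is more than the paper provides, but since you too defer the decisive socle enumeration to the database, the two proofs are essentially the same.
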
 

\section{Construction of a $PSp_4(3)$-polynomial} 

In this section, we will consider $PSp_4(3)$-extension by using Theorem 2.1 
and 2.2 and construct a polynomial which has $PSp_4(3)$ as its Galois group 
over the rational function field over $\mathbb{C}$ of dimension 3. 

For a finite index subgroup $\Gamma $ of $Sp_4(\mathbb{Z})$, we denote by 
$K(\Gamma )$ the modular function field of $\Gamma$, 
that is, the field which consists of meromorphic functions on $H_2$ 
which are invariant  with respect to the action of $\Gamma$. 
It is known that $K(\Gamma)$ is generated by fractions of modular forms for $\Gamma$ 
of the same weight. (c.f. corollary (i) of p.131 of \cite{Kl}).  
We consider a sequence of the modular function fields of $Sp_4(\mathbb{Z})$, 
$\Gamma _0(3)$ and $\Gamma (3)$:   
\[ K(Sp_4(\mathbb{Z}))\subset K(\Gamma _0(3)) \subset K(\Gamma (3)). \] 
It is known that these three fields are purely transcendental over 
$\mathbb{C}$ of dimension 3. (c.f. \cite{Ig64},\cite{Ib91},\cite{Geer}).  
Note that $K(\Gamma (3))/K(Sp_4(\mathbb{Z}))$ 
is a $PSp_4(3)$-extension and $K(\Gamma _0(3))/K(Sp_4(\mathbb{Z}))$ is 
a non-Galois extension of degree 40. 
We will compute a polynomial which defines 
the extension $K(\Gamma _0(3))/K(Sp_4(\mathbb{Z}))$. 
Then the splitting field of the polynomial over $K(Sp_4(\mathbb{Z}))$ is 
$K(\Gamma (3))$ because the Galois group 
Gal$(K(\Gamma (3))/K(Sp_4(\mathbb{Z})))\simeq PSp_4(3)$ is simple. 
Thus the polynomial is a $PSp_4(3)$-polynomial over 
$\mathbb{C}$ with 3 parameters of degree 40.  
(We will consider the Galois group over $\mathbb{Q}$ in section 5.) 
To carry out this computation, we need the following three steps. 
\begin{enumerate} 
\item[1.] We will compute transcendental basis over $\mathbb{C}$ of 
$K(Sp_4(\mathbb{Z}))$ and $K(\Gamma _0(3))$. 
\item[2.] We will find a primitive element of the extension 
$K(\Gamma _0(3))/K(Sp_4(\mathbb{Z}))$.  
\item[3.] We will compute the irreducible polynomial of the element of 
Step2 over $K(Sp_4(\mathbb{Z}))$. This is a polynomial we want. 
\end{enumerate} 

\ \\ 

\textbf{Step 1.} We will compute transcendental basis over $\mathbb{C}$ of 
$K(Sp_4(\mathbb{Z}))$ and $K(\Gamma _0(3))$ by using Theorem 2.1 and 2.2. 
By the structure of $\oplus _{k=0}^{\infty }A_k(Sp_4(\mathbb{Z}))$, 
we see that the former (resp. latter) part of it consists of 
even (resp. odd) weight functions. 
We also see that every odd weight function is a 
product of $\chi _{35}$ and a even weight function. 
So we see that $K(Sp_4(\mathbb{Z}))$ consists of fractions of functions 
of the same weight 
belonging to $\mathbb{C}[\phi_4,\phi_6,\chi_{10},\chi_{12}]$. 
Hence $K(Sp_4(\mathbb{Z}))$ is generated by all functions of the form 
\[ {\phi _4}^a{\phi _6}^b{\chi _{10}}^c{\chi _{12}}^d,\  
(a, b, c, d\in \mathbb{Z}, 2a+3b+5c+6d=0). \] 
We can determine transcendental basis of $K(Sp_4(\mathbb{Z}))$ over 
$\mathbb{C}$ by computing  $\mathbb{Z}$-basis of the free $\mathbb{Z}$ module 
of rank 3, 
\[ V:=\{ \mathbf{x} \in \mathbb{Z}^4 | (2\ 3\ 5\ 6)\mathbf{x} =0\} \] 
because $\phi _4, \phi _6, \chi _{10}, \chi_{12}$ are algebraically  
independent over $\mathbb{C}$. 

We put 
\[ A:=\left( \begin{array}{cccc} 1&0&0&0 \\ 0&1&0&0 \\ 2&3&6&-1 \\ 
-2&-3&-5&1 \end{array} \right) .\] 
Then we have 
\begin{eqnarray*} 
V &=& \{ \mathbf{x} \in \mathbb{Z}^4 | (0\ 0\ 0\ 1)A^{-1}\mathbf{x}=0 \} \\ 
  &=& \mathbb{Z}A\left( \begin{array}{c} 1 \\ 0 \\ 0 \\ 0 \end{array} \right) 
      \oplus 
      \mathbb{Z}A\left( \begin{array}{c} 0 \\ 1 \\ 0 \\ 0 \end{array} \right) 
      \oplus 
      \mathbb{Z}A\left( \begin{array}{c} 0 \\ 0 \\ 1 \\ 0 \end{array} \right) \\
  &=& \mathbb{Z}\left( \begin{array}{c} 1 \\ 0 \\ 2 \\ -2 \end{array} \right) 
      \oplus 
      \mathbb{Z}\left( \begin{array}{c} 0 \\ 1 \\ 3 \\ -3 \end{array} \right) 
      \oplus 
      \mathbb{Z}\left( \begin{array}{c} 0 \\ 0 \\ 6 \\ -5 \end{array} \right) .
\end{eqnarray*} 
It follows that 
\[ K(Sp_4(\mathbb{Z}))= 
\mathbb{C}\left( \frac{\phi _4{\chi _{10}}^2}{{\chi _{12}}^2} , 
                 \frac{\phi _6{\chi _{10}}^3}{{\chi _{12}}^3} , 
                 \frac{{\chi _{10}}^6}{{\chi _{12}}^5} 
          \right). \] 
          
Next we will determine transcendental basis of $K(\Gamma _0(3))$. 
By the structure of $A(\Gamma _0(3))$, we see that the first part of it 
consists of even weight functions and the other parts odd weight. 
We also see that every odd weight function is a product of $\chi _{14}$ 
and $\alpha_1, \beta _3, \gamma _4, \delta _3$. Thus we can see 
$K(\Gamma _0(3))$ is generated by all functions of the form 
\[ {\alpha _1}^a{\beta _3}^b{\gamma _4}^c{\delta _3}^d,\  
(a, b, c, d\in \mathbb{Z}, a+3b+4c+3d=0). \] 
We can determine transcendental basis by the same way as above 
and get 
\[ K(\Gamma _0(3))=\mathbb{C}\left( \frac{\beta _3}{\alpha _1^3}\ ,\ 
                              \frac{\gamma _4}{\alpha _1^4}\ ,\ 
                              \frac{\delta _3}{\alpha _1^3} \right) . \] 
For simplicity, we put 
\[ a:=\frac{\beta _3}{{\alpha _1}^3}\ ,\   
b:=\frac{\gamma _4}{{\alpha _1}^4}\ ,\   
c:=\frac{\delta _3}{{\alpha _1}^3}\ , \] 
\[ x:=\frac{\phi _4{\chi _{10}}^2}{{\chi _{12}}^2}\ ,\  
y:=\frac{\phi _6{\chi _{10}}^3}{{\chi _{12}}^3}\ ,\  
z:=\frac{{\chi _{10}}^6}{{\chi _{12}}^5}\ , \] 
then we have 
\[ K(Sp_4(\mathbb{Z}))=\mathbb{C}(x,y,z) \ ,\     
   K(\Gamma _0(3))=\mathbb{C}(a,b,c). \] 

\ \\ 

\textbf{Step 2.}  We will find a primitive element of the extension 
$K(\Gamma _0(3))/K(Sp_4(\mathbb{Z}))$.  
We have the following relations (\cite{AIb05},p.259). 

\begin{eqnarray*} 
\phi _4 &=& 8\alpha _1\beta _3+41\alpha _1^4-162\gamma _4 +5\alpha _1\delta _3 \\ 
\phi _6 &=& 277\alpha _1^6-2187\alpha _1^2\gamma _4+80\alpha _1^3\beta _3 
         +\frac{11}{8}\delta _3^2+\beta _3^2+\frac{7}{2}\beta _3\delta _3 
         +\frac{91}{2}\alpha _1^3\delta _3 \\ 
\chi_{10} &=& \gamma _4(8\alpha _1^3+2\beta _3-\delta _3 )^2/6144 \\ 
\chi _{12} &=& (-124416\alpha _1^8\gamma _4-192\alpha _1^3\beta _3^2\delta _3-768\alpha _1^6\beta _3\delta _3 
            +16\alpha _1^3\delta _3^3+256\alpha _1^3\beta _3^3 \\ 
           & & +4096\alpha _1^9\beta _3+ 153\alpha _1^6\beta _3^2-1024\alpha _1^9\delta _3+16\beta _3^4-\delta _3^4  
            -16\beta _3^3\delta _3+4\beta _3\delta _3^3 \\ 
           & & +4096\alpha _1^{12}+7776\alpha _1^2\beta _3\delta _3\gamma _4+ 10077696\alpha _1^4\gamma _4^2-62208\alpha _1^5\beta _3\gamma _4  \\ 
           & & +31104\alpha _1^5\delta _3\gamma _4+2519424\alpha _1\beta _3\gamma _4^2 -1944\alpha _1^2\delta _3\gamma _4-7776\alpha _1^2\beta _3^2\gamma _4 \\ 
           & & -68024448\gamma _4^3-1259712\alpha _1\delta _3\gamma _4^2 )/3981312 \\ 
\end{eqnarray*}

By definition of $a$, $b$ and $c$, it is easy to see that  
\begin{eqnarray*}
\phi _4/\alpha _1^4 &=& 8a-162b+5c+41 \\ 
\phi _6/\alpha _1^6 &=& 80a+\frac{91}{2}c-2187b+a^2+\frac{7}{2}ac +\frac{11}{8}c^2+277 \\ 
\chi _{10}/\alpha _1^{10}  &=& b(8+2a-c)^2/6144 \\ 
\chi _{12}/\alpha _1^{12}  &=& (16a^4 + (-16c + 256)a^3 + (-7776b -192c + 1536)a^2 \\ 
                           & & + (2519424b^2 +(7776c - 62208)b + (4c^3 - 768c + 4096))a \\ 
                           & & + (-68024448b^3 + (-1259712c + 10077696)b^2  \\ 
                           & & +(-1944c^2 + 31104c - 124416)b +(-c^4 + 16c^3 - 1024c)) \\ 
                           & & /3981312. 
\end{eqnarray*}

We put 
\[ \theta :=\frac{\chi _{10}{\alpha _1}^2}{\chi _{12}}. \] 
We will prove that $\theta $ is a primitive element of the extension 
$K(\Gamma _0(3))/K(Sp_4(\mathbb{Z}))$. We have 
\begin{eqnarray*} 
K(Sp_4(\mathbb{Z}))(\theta ) &=& \mathbb{C}(x,y,z,\theta ) \\ 
                             &=& \mathbb{C}(x/\theta ^2,y/\theta ^3,
                                            z/\theta ^5,z/\theta ^6) \\ 
                             &=& \mathbb{C}(\phi _4/\alpha _1^4,\phi _6/\alpha _1^6,
                                            \chi _{10}/\alpha _1^{10},\chi _{12}/\alpha _1^{12}) \\ 
                             &=& \mathbb{C}(s,t,u,v), 
\end{eqnarray*}
where 
\[ s:=\phi _4/\alpha _1^4-41,t:=\phi _6/\alpha _1^6-277 \] 
\[ u:=6144\chi _{10}/\alpha _1^{10},v:=3981312\chi _{12}/\alpha _1^{12} . \] 
Note that 
\begin{eqnarray*}
s &=& 8a-162b+5c \\ 
t &=& 80a+\frac{91}{2}c-2187b+a^2+\frac{7}{2}ac +\frac{11}{8}c^2 \\ 
u &=& b(8+2a-c)^2 \\ 
v &=& 16a^4 + (-16c + 256)a^3 + (-7776b -192c + 1536)a^2 \\ 
  & & + (2519424b^2 + (7776c - 62208)b + (4c^3 - 768c + 4096))a \\ 
  & & + (-68024448b^3 + (-1259712c + 10077696)b^2 \\ 
  & & + (-1944c^2 + 31104c - 124416)b +(-c^4 + 16c^3 - 1024c)). 
\end{eqnarray*}   
We have to show that $a$, $b$ and $c$ belong to $\mathbb{C}(s,t,u,v)$ 
in order to show $\mathbb{C}(s,t,u,v)=\mathbb{C}(a,b,c)$. 
We define 
\begin{eqnarray*}
f &:=& 8a-162b+5c-s \\ 
g &:=& 80a+\frac{91}{2}c-2187b+a^2+\frac{7}{2}ac +\frac{11}{8}c^2-t \\ 
h &:=& b(8+2a-c)^2-u \\ 
j &:=& 16a^4 + (-16c + 256)a^3 + (-7776b -192c + 1536)a^2 \\ 
  & & + (2519424b^2 + (7776c - 62208)b + (4c^3 - 768c + 4096))a \\ 
  & & + (-68024448b^3 + (-1259712c + 10077696)b^2 \\ 
  & & + (-1944c^2 + 31104c - 124416)b +(-c^4 + 16c^3 - 1024c))-v. 
\end{eqnarray*}
Here we assume $a$, $b$, $c$, $s$, $t$, $u$ and $v$ are seven independent 
variables. 
We compute a Gr\"obner basis of the ideal $<f,g,h,j>$ relative to 
the lexicographic order with $a>c>v>b>u>t>s$. 
Then we get a Gr\"obner basis which consists of 14 polynomials.  
Among them, one finds two polynomials without $a$, $c$ 
from which a linear equation of $b$ over $\mathbb{Z}[s,t,u,v]$ can be obtained. 
This implies $b\in \mathbb{C}(s,t,u,v)$. 
Also one finds another polynomial which is linear in $c$ over $\mathbb{Z}[u,v,s,t,b]$. 
Thus we see that $c\in \mathbb{C}(u,v,s,t,b)$. 
Finally, by definition of $s$: $s=8a-162b+5c$, 
we have $a \in \mathbb{C}(s,t,u,v)$. 
It follows that 
\[ K(Sp_4(\mathbb{Z}))(\theta )=K(\Gamma _0(3)). \]      
Thus Step2 has been completed.
\ \\ 

\textbf{Step 3.} We will compute the irreducible polynomial of the element 
$\theta $ of Step2 over $K(Sp_4(\mathbb{Z}))$. 
We can get a polynomial relation of $s$, $t$, $u$ and $v$ by eliminating 
$b$ from $G_1=0$ and $G_2=0$. Then we replace $s$, $t$, $u$ and $v$ 
in this relation by 
\[ s=x/\theta ^2-41\ ,\ t=y/\theta ^3-277 \] 
\[ u=6144z/\theta ^5\ ,\ v=3981312z/\theta ^6-4096 \] 
and multiply it by $\theta ^{40}$.  
Then we get a polynomial of $\theta $ over $\mathbb{Q}[x,y,z]$ which has degree 40. 
This is the irreducible polynomial of $\theta $ over 
$K(Sp_4(\mathbb{Z}))=\mathbb{C}(x,y,z)$ and its Galois group over 
$\mathbb{C}(x,y,z)$ is isomorphic to $PSp_4(3)$. 
We omit this polynomial here because it takes up too much space. 
We give an explicit form of $F(x,y,z;X)$ in Appendix.  
     
\section{Proof of Theorem \ref{main}} 

The polynomial $F(x,y,z;X)$ obtained in the previous section has $PSp_4(3)$ 
as Galois group over $\mathbb{C}(x,y,z)$. Actually, the coefficients of 
$F(x,y,z;X)$ are in $\mathbb{Q}[x,y,z]$. 
What can be said about its Galois group over an intermediate field of 
$\mathbb{C}/\mathbb{Q}$ ? 
By answering this question, we can prove Theorem \ref{main}. 
For simplicity, we denote $\mathbb{Q}(x,y,z)$ and $\mathbb{C}(x,y,z)$ 
by $K$ and $K'$ respectively. We also denote by $L$ and $L'$ the splitting 
field of $F(x,y,z;X)$ over $K$ and $K'$ respectively. 
\begin{Lemma} 
$PSp_4(3)$ is primitive as a transitive group of degree 40. 
\end{Lemma}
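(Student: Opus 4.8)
The plan is to reduce the statement to the maximality of a point stabiliser and then to deduce that maximality from the bound recorded in Lemma~3.1. Recall that a transitive action of a group $G$ on the cosets $G/H$ is primitive if and only if $H$ is a maximal subgroup of $G$. The degree-$40$ action in question is the one of $PSp_4(3)$ on $Sp_4(\mathbb{Z})/\Gamma_0(3)$ used in Section~4: since $-1_4$ is block upper triangular mod $3$ we have $-1_4\in\Gamma_0(3)$, so the natural action of $Sp_4(\mathbb{Z})$ on those $40$ cosets is trivial on $(\pm 1_4)\Gamma(3)$ and hence factors through $PSp_4(3)=Sp_4(\mathbb{Z})/(\pm 1_4)\Gamma(3)$; writing $H\le PSp_4(3)$ for the image of $\Gamma_0(3)$, the stabiliser of a point is $H$ and $|H|=25920/40=648$. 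The action is faithful, because its kernel is a normal subgroup of the simple group $PSp_4(3)$ contained in the proper subgroup $H$, hence trivial; so $PSp_4(3)$ genuinely sits inside $S_{40}$ as a transitive subgroup with point stabiliser $H$. It therefore suffices to prove that $H$ is a maximal subgroup of $PSp_4(3)$.

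Next I would argue by contradiction: suppose there is a subgroup $M$ with $H\subsetneq M\subsetneq PSp_4(3)$. Then $[PSp_4(3):M]$ divides $[PSp_4(3):H]=40$, and $2\le [PSp_4(3):M]\le 20$ since $[M:H]\ge 2$. The action of $PSp_4(3)$ on the cosets of $M$ is transitive of degree $[PSp_4(3):M]\le 20$, and again faithful because $M$ is a proper subgroup of the simple group $PSp_4(3)$. Hence $PSp_4(3)$ would be a transitive subgroup of $S_m$ for some $m\le 20$, contradicting Lemma~3.1, which states that the least degree in which $PSp_4(3)$ can be realised as a transitive permutation group is $27$. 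Therefore no such $M$ exists, $H$ is maximal, and the degree-$40$ action is primitive.

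I would also remark that the same bookkeeping proves slightly more: any subgroup of $PSp_4(3)$ of index $40$ has order $648$, and any proper overgroup of it would again have index at most $20$, so in fact \emph{every} transitive action of $PSp_4(3)$ of degree $40$ is primitive, consistent with Lemma~3.2 listing two non-conjugate copies of $PSp_4(3)$ among the primitive groups of degree $40$. I do not expect a genuine obstacle here: the only points needing care are the index arithmetic and the standard fact that a transitive action of a simple group on the cosets of a proper subgroup is automatically faithful, after which the conclusion is immediate from Lemma~3.1 together with the simplicity of $PSp_4(3)$.
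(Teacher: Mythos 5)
Your proof is correct and is essentially the paper's argument in dual language: where you show a proper intermediate subgroup $M\supsetneq H$ would yield a faithful transitive action of the simple group $PSp_4(3)$ on at most $20$ cosets, contradicting Lemma~3.1, the paper works with a nontrivial block system, uses simplicity to show the action on the $r\le 20$ blocks is faithful, and reaches the same contradiction. Since blocks of imprimitivity correspond exactly to overgroups of the point stabiliser, the two proofs are the same argument.
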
 
\begin{proof} 
Suppose $G\simeq PSp_4(3)$ is not primitive. 
Then $G$ has a partition of $\{1,\cdots ,40\} $ satisfying the condition of 
Definition 3.1.  
The group $H:=\{g\in G\ |\ gB_i=B_i\ ,\ \forall i=1,\ldots ,r\} $ 
is a normal subgroup of $G$. 
Because $PSp_4(3)$ is simple, $H$ is 1 or $G$. 
If $H=G$, then it contradicts transitivity of $G$. 
If $H=1$, then $G\simeq PSp_4(3)$ as a permutation group of 
$\{ B_1,\ldots ,B_r\} $. 
This implies $r\mid 40$ and it contradicts Lemma 3.1. 
\end{proof} 

By definition of the symbols $L'$ and $K'$, Gal$(L'/K')$ is a transitive 
group of degree 40. So Gal$(L'/K')$ is primitive by Lemma 5.1. 
Because 
\[ \mathrm{Gal}(L/(L\cap K')) \simeq \mathrm{Gal}(L'/K') \] 
and Gal$(L/K)$ has this group as a subgroup, we see that Gal$(L/K)$ is also 
a primitive group of degree 40. 
Thus Gal$(L/K)$ is conjugate to one of the following 
8 groups by Lemma 3.2. 
\[ PSp_4(3)a\ ,\ PSp_4(3)b\ ,\ PGSp_4(3)a\ ,\ PGSp_4(3)b, \] 
\[ PSL_4(3)\ ,\ PGL_4(3)\ ,\ A_{40}\ ,\ S_{40}. \] 
We have to determine Gal$(L/K)$ out of them. Because the intermediate field 
$L\cap K'$ is a Galois extension over $K$, Gal$(L/(L\cap K'))$ is a 
normal subgroup of Gal$(L/K)$. 
So we can exclude groups not containing $PSp_4(3)$ as their normal subgroups. 
Thus Gal$(L/K)$ is conjugate to one of 
\[ PSp_4(3)a\ ,\ PSp_4(3)b\ ,\ PGSp_4(3)a\ ,\ PGSp_4(3)b. \] 
By consulting the data base of GAP\cite{GAP}, 
we can see that $PGSp_4(3)b$ is the only group not contained in $A_{40}$ 
and the other three groups are contained in $A_{40}$. 
So the discriminant of $F(x,y,z;X)$ determines whether Gal$(L/K)$ is conjugate 
to $PGSp_4(3)b$ or not. 
We denote by $\mathbf{d}(F(x,y,z;X))$ the discriminant of $F(x,y,z;X)$. 
We can write  
\[ \mathbf{d}(F(x,y,z;X))=f(x,y,z)g(x,y,z)^2 \] 
with some $f(x,y,z)$ and $g(x,y,z)$ $\in \mathbb{Q}[x,y,z]$.  
We can assume that $f(x,y,z)$ is square free.  
On the other hand, 
\[ \mathbf{d}(F(x,y,z;X)) \in \mathbb{C}[x,y,z]^2 \] 
because Gal$(L'/K')\simeq PSp_4(3)$, which is contained in $A_{40}$. 
Thus $f(x,y,z)$ must be constant and 
\[ \mathbf{d}(F(x,y,z;X))=r\cdot g(x,y,z)^2 \] 
for a square free integer $r$. 
Whether Gal$(L/K)$ is contained in $A_{40}$ or not depends on $r$.    
By calculation of the discriminant of specialized polynomial $F(1,1,1;X)$,  
it turns out that $r\cdot g(1,1,1)^2=-3\cdot n^2$ for some $n\in \mathbb{Z}$. 
The left hand is the specialization of $\mathbf{d}(F(x,y,z;X))$ and 
the other hand is the discriminant of the specialized polynomial 
$F(1,1,1;X)$.  
Thus we get $r=-3$ and $\mathbf{d}(F(x,y,z;X))=-3\cdot g(x,y,z)^2$. 
It follows that 
if $k$ contains (resp. does not contain) $\sqrt{-3}$ then the Galois group 
of $F(x,y,z;X)$ over $k(x,y,z)$ is isomorphic to 
$PSp_4(3)$ (resp.$PGSp_4(3)$) for an intermediate field $k$ of 
$\mathbb{C}/\mathbb{Q}$ . 
Thus we can see that $F(x,y,z;X)$ is a regular $PSp_4(3)$-polynomial 
over $\mathbb{Q}(\sqrt{-3})$.

\vspace{10mm} 
\section*{Appendix} 

In this appendix, we give the explicit form of $F(x,y,z;X)$ which was 
constructed in section 4. 
It is a 3-parameter polynomial over $\mathbb{Q}$ of degree 40. 
(See Theorem \ref{main}. ) 
We denote 3 parameters by small letters $x,y,z$ and main variable by 
capital letter $X$.

\ 

$F(x,y,z;X)=$ 

\ 

$19683 X^{40}$ 

\ 

$+(-708588 x)\ X^{38}$ 

\ 

$+(-118098 y)\ X^{37}$ 

\ 

$+(8621154 x^2)\ X^{36}$ 

\ 

$+(1180980 y x
 + 5904900 z)\ X^{35}$ 

\ 

$+(-52671708 x^3
 + (-452709 y^2
 + 36610380 z) )\ X^{34}$  

\ 

$+(-3109914 y x^2
 - 66213612 z x)\ X^{33}$ 

\ 

$+(179292447 x^4
 + (5983632 y^2
 - 346027140 z)  x
 - 9447840 z y)\ X^{32}$ 

\ 

$+(-4408992 y x^3
 + 297501984 z x^2
 + (-647352 y^3
 + 63615456 z y) )\ X^{31}$ 

\ 

$+(-319022064 x^5
 + (-33959736 y^2
 + 1033593696 z)  x^2
 + 151375392 z y x
 - 161151282 z^2)\ X^{30}$ 

\ 

$+(16953624 y x^4
 - 654840288 z x^3
 + (8083152 y^3
 - 449087328 z y)  x\\ 
+ (-31597776 z y^2
 + 1280733444 z^2) )\ X^{29}$  

\ 

$+(123469272 x^6
 + (112429296 y^2
 - 96787872 z)  x^3\\ 
 - 917000352 z y x^2
 + 1442702664 z^2 x\\ 
 + (-520506 y^4
 + 43355088 z y^2
 - 2276653878 z^2) )\ X^{28}$  

\ 

$+(121247280 y x^5
 + 537792048 z x^4
 + (-42742728 y^3
 + 673316064 z y)  x^2\\ 
 + (351354672 z y^2
 - 6071969304 z^2)  x
 - 556884558 z^2 y)\ X^{27}$ 

\ 

$+(619883280 x^7
 + (-261049068 y^2
 - 5103408240 z)  x^4
 + 2739663648 z y x^3\\ 
 - 5087701206 z^2 x^2
 + (5756184 y^4
 - 196830000 z y^2
 + 12097250532 z^2)  x\\ 
 + (-28903392 z y^3
 + 1308919500 z^2 y) )\ X^{26}$  

\ 

$+(-670289256 y x^6
 + 953077104 z x^5
 + (125901216 y^3
 + 1820073888 z y)  x^3\\ 
 + (-1497727584 z y^2
 + 2692398204 z^2)  x^2
 + 4143516444 z^2 y x\\ 
 + (-265356 y^5
 + 14206752 z y^3
 - 1402335018 z^2 y
 - 4631937696 z^3) )\ X^{25}$  

\ 

$+(-1156831146 x^8
 + (487193616 y^2
 + 7684768080 z)  x^5\\ 
 - 4363852320 z y x^4
 + 8687297628 z^2 x^3
 + (-26106948 y^4
 - 69914016 z y^2
 - 12500397738 z^2)  x^2\\ 
 + (247440096 z y^3
 - 4506934608 z^2 y)  x\\ 
 + (-461767554 z^2 y^2
 + 22306087800 z^3) )\ X^{24}$  

\ 

$+(1285396128 y x^7
 - 3945348000 z x^6
 + (-230941368 y^3
 - 5241451680 z y)  x^4\\ 
 + (3003923232 z y^2
 + 22965074640 z^2)  x^3
 - 11301689916 z^2 y x^2\\ 
 + (2490264 y^5
 - 28203552 z y^3
 + 1402216920 z^2 y
 + 9935742204 z^3)  x\\ 
 + (-12859560 z y^4
 + 513148932 z^2 y^2
 - 43360468020 z^3) )\ X^{23}$  

\ 

$+(534555288 x^9
 + (-699845832 y^2
 + 2532511008 z)  x^6\\ 
 + 4179934368 z y x^5
 - 7212797442 z^2 x^4
 + (62256600 y^4
 + 1129051872 z y^2
 - 26452639800 z^2)  x^3\\
 + (-758587680 z y^3
 + 1762704504 z^2 y)  x^2
 + (2563700544 z^2 y^2
 - 17495886276 z^3)  x\\
 + (-89586 y^6
 + 2245320 z y^4
 + 115644186 z^2 y^2
 - 1238371254 z^3 y
 + 35599854780 z^3) )\ X^{22}$  

\ 

$+(-915737724 y x^8
 + 6241336416 z x^7
 + (278664624 y^3
 - 701309664 z y)  x^5\\ 
 + (-2786937840 z y^2
 - 28199606652 z^2)  x^4
 + 12942899280 z^2 y x^3\\ 
 + (-9137772 y^5
 - 75232800 z y^3
 + 16261805916 z^2 y
 - 17637096492 z^3)  x^2\\ 
 + (80258040 z y^4
 - 1555796808 z^2 y^2
 + 6215865156 z^3)  x\\ 
 + (-151568334 z^2 y^3
 - 2236741128 z^3 y) )\ X^{21}$  

\ 

$+(421709004 x^{10}
 + (569471472 y^2
 - 11332159200 z)  x^7\\ 
 - 4050510624 z y x^6
 + 4332674448 z^2 x^5
 + (-82787670 y^4
 + 191931120 z y^2
 + 45214488522 z^2)  x^4\\ 
 + (946455840 z y^3
 + 1944750384 z^2 y)  x^3
 + (-4601373642 z^2 y^2
 + 7282631268 z^3)  x^2\\ 
 + (680400 y^6
 - 2070360 z y^4
 - 3283570548 z^2 y^2
 + 8301917610 z^3 y
 - 34297627500 z^3)  x\\ 
 + (-2993760 z y^5
 + 108017388 z^2 y^3
 + 15573950676 z^3 y
 - 1725396471 z^4) )\ X^{20}$ 

\ 

$+(-241030728 y x^9
 - 4094668584 z x^8
 + (-206655624 y^3
 + 10533688416 z y)  x^6\\ 
 + (1326348432 z y^2
 - 12787645608 z^2)  x^5
 - 4703618322 z^2 y x^4\\ 
 + (16166304 y^5
 - 118234080 z y^3
 - 29979588456 z^2 y
 + 6382691460 z^3)  x^3\\ 
 + (-155118240 z y^4
 + 2356597476 z^2 y^2
 + 99805083444 z^3)  x^2
 + (548153892 z^2 y^3
 - 10646907948 z^3 y)  x\\ 
 + (-20088 y^7
 + 272160 z y^5
 + 202304790 z^2 y^3
 - 791291106 z^3 y^2
 - 11899843164 z^3 y
 - 1741548924 z^4) )\ X^{19}$ 

\ 

$+(-309055176 x^{11}
 + (-15025662 y^2
 + 2851480584 z)  x^8
 + 4065285024 z y x^7
 - 6646027158 z^2 x^6\\ 
 + (57888432 y^4
 - 4350776976 z y^2
 + 4170880188 z^2)  x^5\\ 
 + (-383590080 z y^3
 + 9084427668 z^2 y)  x^4
 + (2461244940 z^2 y^2
 + 3681837828 z^3)  x^3\\ 
 + (-1873368 y^6
 + 29509920 z y^4
 + 7568065386 z^2 y^2
 - 6593752998 z^3 y
 - 30354414012 z^3)  x^2\\ 
 + (11671776 z y^5
 - 375277536 z^2 y^3
 - 32693839164 z^3 y
 + 6757423704 z^4)  x\\ 
 + (-20132550 z^2 y^4
 + 1897394544 z^3 y^2
 + 19171088910 z^4) )\ X^{18}$ 

\ 

$+(483199668 y x^{10}
 - 640468296 z x^9
 + (36593856 y^3
 - 4842682848 z y)  x^7\\ 
 + (-1344112704 z y^2
 + 33727432860 z^2)  x^6
 + 956864988 z^2 y x^5\\ 
 + (-13158936 y^5
 + 1090117440 z y^3
 - 2343313638 z^2 y
 + 1740557484 z^3)  x^4\\ 
 + (79820640 z y^4
 - 3548818656 z^2 y^2
 - 105014027556 z^3)  x^3
 + (-478043208 z^2 y^3
 + 13785317100 z^3 y)  x^2\\ 
 + (115344 y^7
 - 2916000 z y^5
 - 794633328 z^2 y^3
 + 974974806 z^3 y^2
 + 31937609556 z^3 y
 - 9704908728 z^4)  x\\ 
 + (-317520 z y^6
 + 13538988 z^2 y^4
 + 66292344 z^3 y^2
 - 1211736510 z^4 y
 - 11769620436 z^4) )\ X^{17}$  

\ 

$+(-169503354 x^{12}
 + (-266184144 y^2
 + 3361205160 z)  x^9\\ 
 - 783880416 z y x^8
 + 5623286004 z^2 x^7\\ 
 + (-11217528 y^4
 + 2921209920 z y^2
 - 18078857370 z^2)  x^6\\ 
 + (189376704 z y^3
 - 16059360672 z^2 y)  x^5\\ 
 + (212100444 z^2 y^2
 + 3389744052 z^3)  x^4\\ 
 + (2113776 y^6
 - 178886880 z y^4
 + 480428496 z^2 y^2
 + 341139114 z^3 y
 + 32039033868 z^3)  x^3\\ 
 + (-8582112 z y^5
 + 535902480 z^2 y^3
 + 27081906768 z^3 y
 - 1727169642 z^4)  x^2\\
 + (35943912 z^2 y^4
 - 2671147368 z^3 y^2
 - 29150350956 z^4)  x\\ 
 + (-2889 y^8
 + 99792 z y^6
 + 27432270 z^2 y^4
 - 15062652 z^3 y^3\\ 
 - 865824552 z^3 y^2
 + 2793674772 z^4 y
 - 23412725109 z^4) )\ X^{16}$ 

\ 

$+(106119072 y x^{11}
 + 1290170592 z x^{10}
 + (73542168 y^3
 - 3078059616 z y)  x^8\\ 
 + (842436288 z y^2
 - 19747921824 z^2)  x^7
 - 2961013320 z^2 y x^6\\ 
 + (2531088 y^5
 - 963088704 z y^3
 + 16439276592 z^2 y
 + 1109061720 z^3)  x^5\\ 
 + (-6318000 z y^4
 + 2892537864 z^2 y^2
 + 25402739832 z^3)  x^4
 + (-24520320 z^2 y^3
 - 6605684784 z^3 y)  x^3\\ 
 + (-208872 y^7
 + 18405792 z y^5
 - 128064888 z^2 y^3
 - 210701088 z^3 y^2
 - 17320935024 z^3 y
 + 4557019824 z^4)  x^2\\ 
 + (370224 z y^6
 - 23963688 z^2 y^4
 - 1134808056 z^3 y^2
 + 712364544 z^4 y
 + 46659522528 z^4)  x\\ 
 + (-911898 z^2 y^5
 + 33845040 z^3 y^3
 + 718315776 z^4 y
 - 568575936 z^5) )\ X^{15}$ 

\ 

$+(48736080 x^{13}
 + (6790392 y^2
 + 219399840 z)  x^{10}
 - 1318304160 z y x^9
 + 612600570 z^2 x^8\\ 
 + (-13127184 y^4
 + 1231578432 z y^2
 - 2056794768 z^2)  x^7\\ 
 + (-296488512 z y^3
 + 13826144016 z^2 y)  x^6\\ 
 + (389355984 z^2 y^2
 - 10425638952 z^3)  x^5\\ 
 + (-579852 y^6
 + 201106800 z y^4
 - 6032839500 z^2 y^2
 + 677770740 z^3 y
 - 10465949736 z^3)  x^4\\ 
 + (-2522016 z y^5
 - 85279392 z^2 y^3
 - 6912774576 z^3 y
 - 881147160 z^4)  x^3\\ 
 + (-4093578 z^2 y^4
 + 1625570856 z^3 y^2
 + 190059048 z^4)  x^2\\ 
 + (11124 y^8
 - 1051056 z y^6
 + 26241084 z^2 y^4\\ 
 + 6406452 z^3 y^3
 - 919327320 z^3 y^2
 - 1883549376 z^4 y
 + 61618340088 z^4)  x\\ 
 + (-2592 z y^7
 - 129276 z^2 y^5
 + 118488744 z^3 y^3\\ 
 - 2383830 z^4 y^2
 - 5743236960 z^4 y
 + 554150808 z^5) )\ X^{14}$ 

\ 

$+(-112161672 y x^{12}
 + 329368032 z x^{11}
 + (-19114704 y^3
 - 66523680 z y)  x^9\\ 
 + (499142736 z y^2
 - 562662612 z^2)  x^8
 + 178500672 z^2 y x^7\\ 
 + (2148552 y^5
 - 288899136 z y^3
 + 409738824 z^2 y
 - 2137284792 z^3)  x^6\\ 
 + (56434320 z y^4
 - 3858703920 z^2 y^2
 + 24562127016 z^3)  x^5\\ 
 + (10611756 z^2 y^3
 + 1592587008 z^3 y)  x^4\\ 
 + (97632 y^7
 - 27311904 z y^5
 + 1174677552 z^2 y^3
 - 218659824 z^3 y^2
 - 3899683440 z^3 y
 + 5098209984 z^4)  x^3\\ 
 + (554976 z y^6
 - 51170940 z^2 y^4
 + 2164809240 z^3 y^2
 - 81369360 z^4 y
 - 47738360880 z^4)  x^2\\ 
 + (901476 z^2 y^5
 - 124967448 z^3 y^3
 + 2585197296 z^4 y
 + 295324380 z^5)  x\\ 
 + (-242 y^9
 + 25056 z y^7
 - 1662606 z^2 y^5\\ 
 + 103356 z^3 y^4
 + 117299016 z^3 y^3
 + 4366224 z^4 y^2\\ 
 - 3559683672 z^4 y
 + 686283516 z^5) )\ X^{13}$ 

\ 

$+(58099032 x^{14}
 + (76667472 y^2
 - 100784736 z)  x^{11}
 - 173101536 z y x^{10}
 - 579404664 z^2 x^9\\ 
 + (5467770 y^4
 + 14218416 z y^2
 + 777264174 z^2)  x^8
 + (-98850240 z y^3
 - 946207008 z^2 y)  x^7\\ 
 + (-94820868 z^2 y^2
 + 4284496296 z^3)  x^6\\ 
 + (-341712 y^6
 + 41828400 z y^4
 + 366768648 z^2 y^2
 + 818500788 z^3 y
 + 14858075592 z^3)  x^5\\ 
 + (-6575904 z y^5
 + 551956680 z^2 y^3
 - 10977672744 z^3 y
 - 328024242 z^4)  x^4\\ 
 + (-4227660 z^2 y^4
 - 151326792 z^3 y^2
 + 11924013888 z^4)  x^3\\ 
 + (-9486 y^8
 + 2304288 z y^6
 - 132748470 z^2 y^4
 + 30442716 z^3 y^3\\ 
 + 2993023224 z^3 y^2
 - 1382568912 z^4 y
 - 55764169740 z^4)  x^2\\ 
 + (-48096 z y^7
 + 6908976 z^2 y^5
 - 399361752 z^3 y^3\\ 
 + 11623176 z^4 y^2
 + 9979438464 z^4 y
 + 356723028 z^5)  x\\ 
 + (-50310 z^2 y^6
 + 6399000 z^3 y^4
 - 182346228 z^4 y^2\\ 
 - 33431454 z^5 y
 - 1497263940 z^5) )\ X^{12}$ 

\ 

$+(-69627600 y x^{13}
 - 14335056 z x^{12}
 + (-24287256 y^3
 + 21827232 z y)  x^{10}\\ 
 + (33598800 z y^2
 + 1138377240 z^2)  x^9
 + 316924974 z^2 y x^8\\ 
 + (-622944 y^5
 - 5054400 z y^3
 - 751173264 z^2 y
 - 72046584 z^3)  x^7\\ 
 + (11789280 z y^4
 + 479466216 z^2 y^2
 - 13238050968 z^3)  x^6\\ 
 + (1529928 z^2 y^3
 - 127446696 z^3 y)  x^5\\ 
 + (37944 y^7
 - 3330720 z y^5
 - 153541980 z^2 y^3
 - 129858228 z^3 y^2
 + 1116192312 z^3 y
 - 1508184360 z^4)  x^4\\ 
 + (434016 z y^6
 - 34752240 z^2 y^4
 + 726276456 z^3 y^2
 + 148723776 z^4 y
 + 13667245344 z^4)  x^3\\ 
 + (-82620 z^2 y^5
 + 48866328 z^3 y^3
 - 2712439872 z^4 y
 - 35767656 z^5)  x^2\\ 
 + (468 y^9
 - 108000 z y^7
 + 8057880 z^2 y^5
 - 1975752 z^3 y^4
 - 296166456 z^3 y^3\\ 
 + 85450464 z^4 y^2
 + 5541683040 z^4 y
 + 964117080 z^5)  x\\ 
 + (1476 z y^8
 - 251316 z^2 y^6
 + 15308028 z^3 y^4\\ 
 + 1608012 z^4 y^3
 - 305392680 z^4 y^2
 - 86558544 z^5 y
 - 1738402560 z^5) )\ X^{11}$ 

\ 

$+(20441808 x^{15}
 + (31734612 y^2
 + 33241104 z)  x^{12}
 + 14004576 z y x^{11}
 - 107773362 z^2 x^{10}\\ 
 + (4044600 y^4
 + 19485360 z y^2
 + 129296412 z^2)  x^9\\ 
 + (-4848480 z y^3
 - 394679628 z^2 y)  x^8\\ 
 + (-58523256 z^2 y^2
 + 871151112 z^3)  x^7\\ 
 + (20328 y^6
 + 496800 z y^4
 + 215786916 z^2 y^2\\ 
 - 102394908 z^3 y
 - 9138458232 z^3)  x^6\\ 
 + (-788832 z y^5
 - 96577056 z^2 y^3
 + 5385402936 z^3 y
 + 249747624 z^4)  x^5\\ 
 + (2410470 z^2 y^4
 - 174578328 z^3 y^2
 - 5220465228 z^4)  x^4\\ 
 + (-2268 y^8
 + 79776 z y^6
 + 24808680 z^2 y^4
 + 7601004 z^3 y^3\\ 
 - 1014902136 z^3 y^2
 + 900730368 z^4 y
 + 20693901384 z^4)  x^3\\ 
 + (-10848 z y^7
 - 11016 z^2 y^5
 + 80830224 z^3 y^3
 - 27048168 z^4 y^2\\ 
 - 3937089888 z^4 y
 - 779458464 z^5)  x^2\\ 
 + (30480 z^2 y^6
 - 3784212 z^3 y^4
 + 91645992 z^4 y^2\\ 
 + 37340838 z^5 y
 + 2843269128 z^5)  x\\ 
 + (-9 y^{10}
 + 2124 z y^8
 - 184626 z^2 y^6
 - 18990 z^3 y^5\\ 
 + 7442604 z^3 y^4
 + 3872880 z^4 y^3 
 - 111620106 z^4 y^2
 - 165804246 z^5 y\\ 
 + (-19961586 z^6
 - 533278080 z^5) ) )\ X^{10}$ 

\ 

$+(-18256968 y x^{14}
 - 3035664 z x^{13}
 + (-7154784 y^3
 - 48255264 z y)  x^{11}\\ 
 + (1298592 z y^2
 - 141526764 z^2)  x^{10}
 + 33123924 z^2 y x^9\\ 
 + (-357084 y^5
 - 8177760 z y^3
 - 35182998 z^2 y
 - 15405768 z^3)  x^8\\ 
 + (672480 z y^4
 + 37037088 z^2 y^2
 + 3141435960 z^3)  x^7
 + (4854456 z^2 y^3
 - 347845752 z^3 y)  x^6\\ 
 + (1296 y^7
 + 99360 z y^5
 - 24708240 z^2 y^3
 + 34145388 z^3 y^2
 + 639344664 z^3 y
 - 461722032 z^4)  x^5\\ 
 + (13680 z y^6
 + 9743220 z^2 y^4
 - 530169624 z^3 y^2
 - 57998268 z^4 y
 - 602183160 z^4)  x^4\\ 
 + (-214992 z^2 y^5
 + 1819152 z^3 y^3
 + 1081567728 z^4 y
 + 18040212 z^5)  x^3\\ 
 + (54 y^9
 + 6048 z y^7
 - 1936548 z^2 y^5
 + 365724 z^3 y^4
 + 105967440 z^3 y^3\\ 
 - 60934680 z^4 y^2
 - 2338882776 z^4 y
 - 1064286540 z^5)  x^2\\ 
 + (-108 z y^8
 + 55944 z^2 y^6
 - 4687308 z^3 y^4
 - 638640 z^4 y^3\\ 
 + 93568608 z^4 y^2
 + 120417192 z^5 y
 + 2410528896 z^5)  x\\ 
 + (54 z^2 y^7
 - 61992 z^3 y^5
 + 7323696 z^4 y^3
 + 496908 z^5 y^2\\ 
 - 245288088 z^5 y
 - 67658652 z^6) )\ X^9$ 

\ 

$+(3984903 x^{16}
 + (6190128 y^2
 + 20358864 z)  x^{13}\\ 
 - 5641056 z y x^{12}
 + 10109124 z^2 x^{11}
 + (846972 y^4
 + 19769184 z y^2
 + 5825682 z^2)  x^{10}\\ 
 + (-1798560 z y^3
 + 87711984 z^2 y)  x^9
 + (-357210 z^2 y^2
 - 774512928 z^3)  x^8\\ 
 + (15120 y^6
 + 1153440 z y^4
 - 4700592 z^2 y^2
 + 36905220 z^3 y
 + 2517995160 z^3)  x^7\\ 
 + (-40608 z y^5
 - 1273968 z^2 y^3
 - 994414320 z^3 y
 - 21993768 z^4)  x^6\\ 
 + (-336960 z^2 y^4
 + 79227720 z^3 y^2
 + 541378728 z^4)  x^5\\ 
 + (-81 y^8
 - 19440 z y^6
 + 1008450 z^2 y^4
 - 4091364 z^3 y^3\\ 
 + 86086152 z^3 y^2
 - 35448840 z^4 y
 - 3298892670 z^4)  x^4\\ 
 + (864 z y^7
 - 343440 z^2 y^5
 + 6341328 z^3 y^3
 + 9607896 z^4 y^2\\ 
 + 594957312 z^4 y
 + 126349308 z^5)  x^3\\ 
 + (-918 z^2 y^6
 + 1025460 z^3 y^4
 - 51665688 z^4 y^2
 - 15725016 z^5 y
 - 917813916 z^5)  x^2\\ 
 + (-324 z y^8
 + 53460 z^2 y^6
 + 8802 z^3 y^5
 - 2695356 z^3 y^4\\ 
 - 2065824 z^4 y^3
 + 38508696 z^4 y^2
 + 140640138 z^5 y
 + (10639188 z^6
 + 688642560 z^5) )  x\\ 
 + (324 z^2 y^7
 - 82620 z^3 y^5
 - 9099 z^4 y^4
 + 6360768 z^4 y^3\\ 
 + 1339416 z^5 y^2
 - 165722112 z^5 y
 - 85571478 z^6) )\ X^8$ 

\ 

$+(-2701728 y x^{15}
 + 3425184 z x^{14}
 + (-1000104 y^3
 - 14339808 z y)  x^{12}\\ 
 + (3748896 z y^2
 - 21230640 z^2)  x^{11}
 - 12739644 z^2 y x^{10}\\ 
 + (-49896 y^5
 - 3422880 z y^3
 + 1481976 z^2 y
 + 33535980 z^3)  x^9\\ 
 + (274200 z y^4
 - 7668108 z^2 y^2
 - 567394308 z^3)  x^8
 + (-869664 z^2 y^3
 + 208449936 z^3 y)  x^7\\ 
 + (-216 y^7
 - 67104 z y^5
 + 918216 z^2 y^3
 - 6866352 z^3 y^2
 - 241153200 z^3 y
 - 10882512 z^4)  x^6\\ 
 + (-144 z y^6
 - 55656 z^2 y^4
 + 76953240 z^3 y^2
 + 554688 z^4 y
 - 69898464 z^4)  x^5\\ 
 + (35238 z^2 y^5
 - 5188896 z^3 y^3
 - 87837696 z^4 y
 + 6956496 z^5)  x^4\\ 
 + (864 z y^7
 + 39096 z^2 y^5
 + 44988 z^3 y^4
 - 11673072 z^3 y^3\\ 
 + 8476704 z^4 y^2
 + 363730176 z^4 y
 + 156247056 z^5)  x^3\\ 
 + (-2916 z^2 y^6
 + 699732 z^3 y^4
 + 61848 z^4 y^3\\ 
 - 22383216 z^4 y^2
 - 34875360 z^5 y
 - 571815936 z^5)  x^2\\ 
 + (-36 z^2 y^7
 + 10116 z^3 y^5
 - 1975536 z^4 y^3\\ 
 + 242568 z^5 y^2
 + 97355520 z^5 y
 + 26867376 z^6)  x\\ 
 + (162 z^2 y^7
 + 38 z^3 y^6
 - 29484 z^3 y^5
 - 8820 z^4 y^4
 + 1776816 z^4 y^3\\ 
 + 1205928 z^5 y^2
 + (-383508 z^6
 - 38071296 z^5)  y
 - 64820736 z^6) )\ X^7$ 

\ 

$+(477684 x^{17}
 + (661896 y^2
 + 3252960 z)  x^{14}
 - 3115296 z y x^{13}
 + 6960186 z^2 x^{12}\\ 
 + (77112 y^4
 + 3425760 z y^2
 - 460728 z^2)  x^{11}\\ 
 + (-667296 z y^3
 - 7800840 z^2 y)  x^{10}
 + (4111632 z^2 y^2
 + 103529772 z^3)  x^9\\ 
 + (1134 y^6
 + 266760 z y^4
 + 1323378 z^2 y^2
 - 21862782 z^3 y
 - 336940884 z^3)  x^8\\ 
 + (-11232 z y^5
 - 678240 z^2 y^3
 + 136309392 z^3 y
 + 22700376 z^4)  x^7\\ 
 + (67626 z^2 y^4
 - 15946632 z^3 y^2
 - 40592664 z^4)  x^6\\ 
 + (1296 z y^6
 + 17820 z^2 y^4
 + 566316 z^3 y^3
 + 2155896 z^3 y^2
 - 19956672 z^4 y
 + 220222152 z^4)  x^5\\ 
 + (8100 z^2 y^5
 - 2005560 z^3 y^3
 + 665010 z^4 y^2
 - 18996768 z^4 y
 + 49541112 z^5)  x^4\\ 
 + (-540 z^2 y^6
 + 30564 z^3 y^4
 + 6220800 z^4 y^2
 - 3754548 z^5 y
 + 44000496 z^5)  x^3\\ 
 + (-3402 z^2 y^6
 - 1158 z^3 y^5
 + 345060 z^3 y^4
 + 250560 z^4 y^3\\ 
 - 5409180 z^4 y^2
 - 26795772 z^5 y
 + (2624292 z^6
 - 176359680 z^5) )  x^2\\ 
 + (6804 z^3 y^5
 - 7728 z^4 y^4
 - 1197504 z^4 y^3\\ 
 + 356724 z^5 y^2
 + 48522240 z^5 y
 + 31477896 z^6)  x\\ 
 + (-3402 z^4 y^4
 + 9426 z^5 y^3
 + 852444 z^5 y^2\\ 
 - 868860 z^6 y
 - 41617152 z^6) )\ X^6$ 

\ 

$+(-235458 y x^{16}
 + 895392 z x^{15}
 + (-69552 y^3
 - 1416096 z y)  x^{13}\\ 
 + (744048 z y^2
 + 7262028 z^2)  x^{12}
 - 3872016 z^2 y x^{11}\\ 
 + (-2268 y^5
 - 337824 z y^3
 - 884196 z^2 y
 + 5127588 z^3)  x^{10}\\ 
 + (35640 z y^4
 + 3222072 z^2 y^2
 + 53656020 z^3)  x^9\\ 
 + (-389070 z^2 y^3
 - 41245848 z^3 y)  x^8\\ 
 + (-7776 z y^5
 - 439344 z^2 y^3
 + 3835296 z^3 y^2
 + 36201168 z^3 y
 + 63680256 z^4)  x^7\\ 
 + (64476 z^2 y^4
 - 5515128 z^3 y^2
 - 9126432 z^4 y
 - 66130992 z^4)  x^6\\ 
 + (-324 z^2 y^5
 + 436104 z^3 y^3
 - 3114288 z^4 y
 + 6262596 z^5)  x^5\\ 
 + (-2430 z^2 y^5
 - 34056 z^3 y^4
 + 215784 z^3 y^3\\ 
 + 1861056 z^4 y^2
 - 11996424 z^4 y
 + 26923428 z^5)  x^4\\ 
 + (-2916 z^3 y^4
 + 16992 z^4 y^3
 + 1314144 z^4 y^2\\ 
 - 6686064 z^5 y
 + 1306368 z^5)  x^3\\ 
 + (-324 z^3 y^5
 + 66096 z^4 y^3
 + 150228 z^5 y^2
 - 6660144 z^5 y
 + 3897720 z^6)  x^2\\ 
 + (54 z^3 y^6
 + 2916 z^3 y^5
 - 5832 z^4 y^4\\ 
 - 361584 z^4 y^3
 - 2916 z^5 y^2 
 + (-193284 z^6\\ 
 + 11197440 z^5)  y
 + 15925248 z^6)  x\\ 
 + (-54 z^4 y^5
 - 2916 z^4 y^4
 + 12960 z^5 y^3
 + 466560 z^5 y^2\\ 
 - 674568 z^6 y
 + (57204 z^7
 - 17915904 z^6) ) )\ X^5$ 

\ 

$+(35234 x^{18}
 + (37584 y^2
 + 188640 z)  x^{15}
 - 397920 z y x^{14}\\ 
 + 1151520 z^2 x^{13}
 + (2646 y^4
 + 189072 z y^2
 - 244674 z^2)  x^{12}\\ 
 + (-51552 z y^3
 - 2805264 z^2 y)  x^{11}
 + (504654 z^2 y^2
 + 7393716 z^3)  x^{10}\\ 
 + (11880 z y^4
 + 391500 z^2 y^2
 - 1463838 z^3 y
 + 12727908 z^3)  x^9\\ 
 + (-195156 z^2 y^3
 - 11162124 z^3 y
 + 1273185 z^4)  x^8\\ 
 + (2916 z^2 y^4
 + 3260520 z^3 y^2
 + 16715808 z^4)  x^7\\ 
 + (21222 z^2 y^4
 - 134652 z^3 y^3
 - 795096 z^3 y^2
 - 7841232 z^4 y
 - 19705356 z^4)  x^6\\ 
 + (-61560 z^3 y^3
 + 398736 z^4 y^2
 + 6023808 z^4 y
 + 3536460 z^5)  x^5\\ 
 + (-11772 z^3 y^4
 + 454572 z^4 y^2
 - 135054 z^5 y
 + 4012740 z^5)  x^4\\ 
 + (666 z^3 y^5
 + 972 z^3 y^4
 + 2016 z^4 y^3
 - 106920 z^4 y^2\\ 
 - 3339468 z^5 y
 + (-315936 z^6
 + 4852224 z^5) )  x^3\\ 
 + (-1974 z^4 y^4
 + 152928 z^5 y^2
 - 248832 z^5 y
 + 3115260 z^6)  x^2\\ 
 + (324 z^4 y^4
 + 1878 z^5 y^3
 - 47304 z^5 y^2
 - 254340 z^6 y
 + 2363904 z^6)  x\\ 
 + (-36 z^4 y^5
 - 729 z^4 y^4
 + 4914 z^5 y^3
 + (-531 z^6
 + 93312 z^5)  y^2\\ 
 - 176256 z^6 y
 + (106380 z^7
 - 2985984 z^6) ) )\ X^4$ 

\ 

$+(-11340 y x^{17}
 + 83268 z x^{16}
 + (-1944 y^3
 - 38880 z y)  x^{14}\\ 
 + (40176 z y^2
 + 604584 z^2)  x^{13}
 - 283662 z^2 y x^{12}\\ 
 + (-7776 z y^3
 - 52488 z^2 y
 + 538164 z^3)  x^{11}\\ 
 + (154548 z^2 y^2
 - 2283228 z^3)  x^{10}
 + (-972 z^2 y^3
 + 107892 z^3 y)  x^9\\ 
 + (-18954 z^2 y^3
 - 85914 z^3 y^2
 - 953532 z^3 y
 - 1357884 z^4)  x^8\\ 
 + (507384 z^3 y^2
 + 463104 z^4 y
 + 5388768 z^4)  x^7\\ 
 + (-44712 z^3 y^3
 - 2379456 z^4 y
 - 587304 z^5)  x^6\\ 
 + (2268 z^3 y^4
 - 17496 z^3 y^3
 + 160704 z^4 y^2
 + 1049760 z^4 y
 + 2583576 z^5)  x^5\\ 
 + (-10260 z^4 y^3
 + 17496 z^4 y^2
 - 22032 z^5 y
 - 1679616 z^5)  x^4\\ 
 + (11664 z^4 y^3
 + 15120 z^5 y^2
 - 699840 z^5 y
 - 301968 z^6)  x^3\\ 
 + (-648 z^4 y^4
 + 27216 z^5 y^2
 - 12636 z^6 y
 + 1119744 z^6)  x^2\\ 
 + (1296 z^5 y^3
 - 101088 z^6 y
 + 17496 z^7)  x\\ 
 + (-648 z^6 y^2
 + 62208 z^7) )\ X^3$ 

\ 

$+(1476 x^{19}
 + (891 y^2
 - 756 z)  x^{16}
 - 16416 z y x^{15}
 + 66078 z^2 x^{14}\\ 
 + (1296 z y^2
 - 10044 z^2)  x^{13}
 - 6804 z^2 y x^{12}\\ 
 + (-2484 z^2 y^2
 - 121932 z^3)  x^{11}
 + (1458 z^2 y^2
 + 32514 z^3 y
 - 109836 z^3)  x^{10}\\ 
 + (142884 z^3 y
 - 60864 z^4)  x^9
 + (-40824 z^3 y^2
 - 86994 z^4)  x^8\\ 
 + (2484 z^3 y^3
 + 17496 z^3 y^2
 + 162432 z^4 y
 + 414072 z^4)  x^7\\ 
 + (-13572 z^4 y^2
 - 396576 z^4 y
 - 181872 z^5)  x^6\\ 
 + (40824 z^4 y^2
 + 20142 z^5 y
 + 456840 z^5)  x^5\\ 
 + (-2160 z^4 y^3
 + 4374 z^4 y^2
 - 50382 z^5 y
 + (-2466 z^6
 - 279936 z^5) )  x^4\\ 
 + (7236 z^5 y^2
 - 82296 z^6)  x^3
 + (36 z^5 y^3
 - 2916 z^5 y^2
 - 9396 z^6 y
 + 186624 z^6)  x^2\\ 
 + (162 z^5 y^3
 - 126 z^6 y^2
 - 10368 z^6 y
 + 16632 z^7)  x\\ 
 + (-162 z^6 y^2
 - 126 z^7 y
 + 10368 z^7) )\ X^2$ 

\ 

$+(-234 y x^{18}
 + 2772 z x^{17}
 + 864 z y x^{15}
 - 22572 z^2 x^{14}
 + 1332 z^2 y x^{13}\\ 
 + (486 z^2 y
 - 2052 z^3)  x^{12}
 + 70956 z^3 x^{11}
 - 11988 z^3 y x^{10}\\ 
 + (654 z^3 y^2
 - 972 z^3 y
 + 23976 z^4)  x^9
 + (-3414 z^4 y
 - 115668 z^4)  x^8\\ 
 + (27216 z^4 y
 + 4812 z^5)  x^7
 + (-1656 z^4 y^2
 - 5832 z^4 y
 - 53460 z^5)  x^6\\ 
 + (5256 z^5 y
 + 93312 z^5)  x^5
 + (120 z^5 y^2
 - 9720 z^5 y
 - 1980 z^6)  x^4\\ 
 + (540 z^5 y^2
 - 588 z^6 y)  x^3
 + (-1080 z^6 y
 + 612 z^7)  x^2\\ 
 + (-36 z^6 y^2
 + 3456 z^7)  x
 + (z^6 y^3
 - 72 z^7 y) )\ X$ 

\ 

$+27 x^{20}
 - 324 z x^{17}
 + 108 z^2 x^{15}
 + 1458 z^2 x^{14}
 - 972 z^3 x^{12}\\ 
 + (18 z^3 y
 - 2916 z^3)  x^{11}
 + 90 z^4 x^{10}
 + 2916 z^4 x^9\\ 
 + (-108 z^4 y
 + 2187 z^4)  x^8
 - 540 z^5 x^7
 + (36 z^5 y
 - 2916 z^5)  x^6\\ 
 + (162 z^5 y
 - 36 z^6)  x^5
 + 810 z^6 x^4
 - 108 z^6 y x^3\\ 
 + (3 z^6 y^2
 + 108 z^7)  x^2
 - 6 z^7 y x
 + 3 z^8$

\end{document}